\let\oldmarginpar\marginpar
\renewcommand\marginpar[1]{\-\oldmarginpar[\raggedleft\footnotesize #1]%
{\raggedright\footnotesize #1}}
\renewcommand{\P}{\mathcal{P}}
\newcommand{\Q}{\mathcal{Q}}
\renewcommand{\leq}{\leqslant}
\renewcommand{\geq}{\geqslant}
\newcommand{\gen}[1]{\bigl<#1\bigr>}
\newcommand{\sym}[1]{S{(#1)}}
\newcommand{\id}{{\rm id}}
\newcommand{\im}{\operatorname{im}}
\newcommand{\rank}{\mbox{\emph{\mbox{\rm rank}\,}}}
\newcommand{\genset}[1]{\langle#1\rangle}
\renewcommand{\to}{\longrightarrow}
\newcommand{\F}{\mathbb F}
\newtheorem{theorem}{Theorem}[section]
\newtheorem{lemma}[theorem]{Lemma}
\newtheorem{cor}[theorem]{Corollary}
\newtheorem{exam}[theorem]{Example}
\newtheorem{defn}[theorem]{Definition}
\newtheorem{prob}[theorem]{Problem}
\title[The rank of the endomorphisms of a partition]
{The rank of the semigroup of transformations stabilising a partition
of a finite set}
\author{Jo\~ao Ara\'ujo, Wolfram Bentz, J. D. Mitchell \and Csaba Schneider}
\address[Ara\'ujo]{Universidade Aberta, R. Escola Polit\'{e}cnica, 147\\
1269-001 Lisboa, Portugal\\
Centro de \'{A}lgebra, Universidade de Lisboa\\
1649-003 Lisboa, Portugal, jaraujo@lmc.fc.ul.pt}
\address[Bentz]{Centro de \'{A}lgebra, Universidade de Lisboa\\
1649-003 Lisboa, Portugal, wfbentz@fc.ul.pt}
\address[Schneider]{Departamento de Matem\'tica \\
Instituto de Ci\^encias Exatas \\
Universidade Federal de Minas Gerais \\
Av. Ant\^onio Carlos, 6627 \\
Caixa Postal 702 \\
31270-901 Belo Horizonte, MG, Brazil, csaba@mat.ufmg.br}
\date{\today}
\begin{document}
\begin{abstract}
Let $\mathcal{P}$ be a partition of a finite set $X$. We say that a full transformation $f:X\to X$ preserves (or stabilizes) the partition $\mathcal{P}$ if for all $P\in \mathcal{P}$ there exists $Q\in \mathcal{P}$ such that $Pf\subseteq Q$. Let  $T(X,\mathcal{P})$ denote the semigroup of all full transformations of $X$ that preserve the partition $\mathcal{P}$.

In 2005 Huisheng found an upper bound for the minimum size of the generating sets of $T(X,\mathcal{P})$, when  $\mathcal{P}$ is a partition in which all of its parts have the same size. In addition, Huisheng conjectured that his bound was exact. In 2009 the first and last authors used representation theory to completely solve Hisheng's conjecture.

The goal of this paper is to solve the much more complex problem of finding the minimum size of the generating sets of $T(X,\mathcal{P})$, when  $\mathcal{P}$ is an arbitrary partition. Again we use representation theory to find the minimum number of elements needed to generate the wreath product of finitely many symmetric groups, and then use this result to solve the problem.

The paper ends with a number of problems for experts in group and semigroup theories.
\end{abstract}
\maketitle

\section{Introduction}

If $S$ is a semigroup and $U$ is a subset of $S$, then we say that {\em $U$
generates $S$} if every element of $S$ is expressible as a product of the
elements of $U$. The {\em rank} of a semigroup $S$, denoted by $\rank S$, is the
least cardinality of a subset that generates $S$.  It is well-known that a
finite full transformation semigroup, on at least 3 points, has rank~3, while a
finite full partial transformation semigroup, on at least 3 points, has rank~4
(see \cite[Exercises~1.9.7 and 1.9.13]{howie}). The problem of determining the
minimum number of generators of a semigroup is classical, and has been studied
extensively; see, for example, \cite{1,8,10,15,16}. Related notions, such as the
{\em idempotent rank}, the {\em nilpotent rank} and the relative rank of a
subsemigroup, have also been widely investigated; see
\cite{0,2,mitchell1,6,9,mitchell2,8.5,11,14}.

In~\cite{huisheng}, Huisheng posed the problem of finding the rank of the
semigroup of transformations preserving a uniform partition (that is, a
partition in which all the blocks have equal size). This problem was solved in
\cite{as}.  In this paper, we solve the general problem of determining the rank
of the semigroup of transformations preserving any partition.  In the process,
we calculate the ranks of some related transformation semigroups.  The strategy
of the proof is similar to the one used in \cite{as}: we rely on representation
theory to find the rank of the group of automorphisms of the partition and then
use that result to derive the rank of the semigroup.

Let $X$ be a non-empty finite set, and let $\P$ be a partition of $X$.
A \emph{transformation} is a function from $X$ to itself. We write
transformations to the right of their arguments and compose them from left to
right.  We denote by $T(X, \P)$ the semigroup consisting of those
transformations $f$ on $X$ such that $(x,y)\in \P$ implies $(xf, yf)\in
\P$.  The semigroup $T(X, \P)$ can be seen as the endomorphism
monoid of the relational structure $(X,\P)$.

We will determine the rank of $T(X,\P)$. In order to do this we will
determine relative ranks with regard to two subsets  of $T(X, \P)$. One
is the group of units of $T(X, \P)$, which is the intersection of $T(X,
\P)$ with the symmetric group $S_X$ on $X$; the other is $\Sigma(X,
\P)$, consisting of $f \in T(X, \P)$ whose image intersects
every block of $\P$. We will denote the group of units of $T(X,
\P)$ by $S(X, \P)$.

The main theorem of this paper is the following.

\begin{theorem}
  Let $\P$ be a partition on $X$, such that $\P$ has exactly $m_i \ge 2$ blocks
  of size $n_i \ge 2$, $i=1, \dots, p$, blocks of unique sizes
  $l_1, \dots , l_q$, where $l_i \ge 2$, and $t$ singleton blocks (where
  $p$, $q$, $t$ might be $0$).  If $|S(X, \P)| \ge 3$  then the rank of $T(X, \P)$ is
  $$\max\{2,2p+q+g(t)\}+\binom{p+q}{2}+2p+q+g'(t)-1+l + h(p,q,t),$$
  where
\begin{itemize}
\item $g(0)=g(1)=0$ and $g(t)=1$ for $t \ge 2$,
\item  $g'(0)=0$ and $g'(t)=1$ for $t \ge 1$.
\item $l$ is the number of values $s$ for which $\P$ has a block of size $s\ge
  2$, but no block of size $s-1$,
\item $h(p,q,0)=0$, $h(p,q,1)=p+q$ and $h(p,q,t)=p+q+1$, if $t \ge 2$.
\end{itemize}
\end{theorem}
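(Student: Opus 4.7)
The plan is to exploit the chain of subsemigroups $S(X,\P)\leq \Sigma(X,\P)\leq T(X,\P)$ together with the standard inequality
$$\rank T(X,\P) \leq \rank S(X,\P) + \rank\bigl(\Sigma(X,\P) : S(X,\P)\bigr) + \rank\bigl(T(X,\P) : \Sigma(X,\P)\bigr),$$
and to identify the three summands with $\max\{2,2p+q+g(t)\}$, with $2p+q+g'(t)-1+l$, and with $\binom{p+q}{2}+h(p,q,t)$ respectively; a matching lower bound then completes the proof.

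For the first summand I would use the isomorphism $S(X,\P)\cong \prod_{i=1}^p(S_{n_i}\wr S_{m_i})\times\prod_{j=1}^q S_{l_j}\times S_t$ and extend the representation-theoretic argument of \cite{as} from a single wreath product to this direct product. Any generating tuple must surject onto every simple quotient and every abelianisation; each factor $S_{n_i}\wr S_{m_i}$ contributes two $\mathbb{Z}/2$ quotients (the two sign characters), each $S_{l_j}$ contributes one, and $S_t$ contributes $0$ for $t\leq 1$ and $1$ for $t\geq 2$. This yields the count $2p+q+g(t)$, with $\max\{2,\cdot\}$ accounting for the fact that a non-abelian group requires at least two generators. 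For the second summand I would construct, for each available collapse type of a block, a single idempotent that identifies two points of one block and fixes everything else; multiplied by units of $S(X,\P)$ these generate $\Sigma(X,\P)$. The term $l$ reflects extra idempotents needed when no block of size $s-1$ exists to absorb a collapse from size $s$, and the $g'(t)-1$ encodes bookkeeping around singleton interactions and a global redundancy. For the third summand, elements of $T(X,\P)\setminus\Sigma(X,\P)$ miss entire blocks, so for each unordered pair of non-singleton block types one generator is needed to realise a non-surjective induced map on $\P$, giving the $\binom{p+q}{2}$; the $h(p,q,t)$ then supplies additional generators needed to exploit singletons as targets for collapsed blocks.

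The main obstacle will be the rank computation for the group $S(X,\P)$: extending the representation-theoretic method of \cite{as} from a uniform wreath product to a direct product of wreath products with varying sizes, plus extra $S_{l_j}$ and $S_t$ factors, requires a delicate analysis of how the various simple quotients can be aligned across factors, and this is precisely what produces both the $\max$ and the case distinctions packaged in $g$, $g'$, and $h$. A secondary challenge is the lower bound, for which one must exhibit invariants on $T(X,\P)$ -- such as the induced permutation class on $\P$, the image-size profile on each block type, and parity invariants on each wreath factor -- that any single element can alter only in restricted ways, so that a generating set must contain at least as many elements as the formula predicts.
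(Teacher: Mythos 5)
Your overall strategy is the same as the paper's (the chain $S(X,\P)\leq \Sigma(X,\P)\leq T(X,\P)$, additivity of relative ranks coming from the fact that the complements are ideals, and a representation-theoretic/abelianisation lower bound of $2p+q+g(t)$ for the group of units, whose correct value $\max\{2,2p+q+g(t)\}$ you state). However, your allocation of the remaining terms between the two relative ranks is wrong, and the error is conceptual rather than a bookkeeping slip. The correct values are $\rank\bigl(T(X,\P):\Sigma(X,\P)\bigr)=\binom{p+q}{2}+p+h(p,q,t)$ and $\rank\bigl(\Sigma(X,\P):S(X,\P)\bigr)=p+q+g'(t)-1+l$; you have moved the summand $p$ from the first of these into the second. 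For $T(X,\P)$ over $\Sigma(X,\P)$ it is not enough to have one generator per unordered pair of \emph{distinct} block sizes: the orbits of $S(X,\P)\times S(X,\P)$ acting on the relevant maps are indexed by unordered pairs of sizes that can be realised by two \emph{distinct} blocks, and this includes the diagonal pairs $(n_i,n_i)$ for each of the $p$ repeated sizes (collapsing one block of size $n_i$ into another block of the same size is a genuinely different orbit from any cross-size collapse, by the invariant $J_{i,j,f}$ of Lemma 2.3). Omitting these $p$ diagonal generators leaves elements of $T(X,\P)$ that cannot be reached, so your claimed value $\binom{p+q}{2}+h(p,q,t)$ is an undercount and the corresponding generating set would fail.

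Symmetrically, your description of the generators of $\Sigma(X,\P)$ over $S(X,\P)$ as ``idempotents that identify two points of one block and fix everything else'' only accounts for the $\mathcal{C}$-type generators (which is where the $l$ comes from). The dominant contribution $p+q+g'(t)-1$ comes from a different kind of element entirely: maps that send a block of size $l_i$ injectively into a block of size $l_{i+1}$ while mapping a block of size $l_{i+1}$ onto one of size $l_i$, one for each consecutive pair of distinct block sizes. These are needed because elements of $\Sigma(X,\P)$ may permute the block structure across different sizes, and no product of within-block idempotents and units can achieve this; conversely each such ``size-swap'' generator is forced by an image-size argument (Lemma 4.1 of the paper). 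Since your target value $2p+q+g'(t)-1+l$ for this relative rank is strictly larger than the truth whenever $p\geq 1$, your matching lower bound for it cannot exist, so the plan as stated cannot be carried out even though the two errors happen to cancel in the final sum.
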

The rank of $T(X,\P)$ is given in Figure \ref{ranktable} for partitions of small values of
$|X|$. For comparison, Figure \ref{monoidtable} lists the corresponding sizes 
of the monoids $T(X,\P)$.

If $U$ is a subset of a semigroup $V$, then, as usual we denote the subsemigroup
generated by $U$ by $\genset{U}$.  If $U$ is a subsemigroup of a semigroup $V$,
then the least cardinality of a subset $W$ of $V$ such that $\genset{U, W}=V$ is
called the \emph{relative rank} of $U$ in $V$; this is denoted $\rank(V:U)$.

Since $S(X, \P)\subseteq \Sigma(X, \P)$ and the complements of $S(X, \P)$ and
$\Sigma(X, \P)$ are ideals, it follows that
\begin{equation*}
  \begin{array}{rcl}
    \rank(T(X,\P))&=&\rank(T(X,\P):\Sigma(X,\P))+\rank(\Sigma(X,\P)) \\
                  &=&\rank(T(X,\P):\Sigma(X,\P))+\rank(\Sigma(X,\P):S(X,
    \P))+\rank(S(X, \P))
   \end{array}
\end{equation*}

To prove our main theorem, we will determine that under the given conditions
\begin{itemize}
  \item $\rank(S(X, \P))= \max\{2,2p+q+g(t)\}$
 (Section \ref{section-group-of-units}),
\item $\rank(T(X,\P):\Sigma(X,\P))= \binom{p+q}{2}+p+h(p,q,t)$ (Section \ref{section-rel-rank-1}), and
\item $\rank(\Sigma(X,\P):S(X, \P))=p+q+g'(t)-1+l$ (Section \ref{section-rel-rank-2}).
\end{itemize}
For completeness, we remark that if $S(X,\P)$ has two elements, we are in one of the following straightforward cases:
\begin{itemize}
\item $|X|=2$, $\P=\{P_1\}$, $|P_1|=2$, $\rank(T(X,\P))=2$.
\item $|X|=2$, $\P=\{P_1, P_2\}$, $|P_1|=|P_2|=1$, $\rank(T(X,\P))=2$.
\item $|X|=3$, $\P=\{P_1, P_2\}$, $|P_1|=2$, $|P_2|=1$, $\rank(T(X,\P))=3$.
\end{itemize}
\begin{figure}\label{ranktable}
  \begin{center}
    \begin{tabular}{lrclrclrclrclr}
      2+1 & 3 && 2+1+1 & 5 && 2+1+1+1 & 5 && 2+1+1+1+1 & 5 && 2+1+1+1+1+1 & 5
          \\\midrule
          &   && 2+2   & 4 && 2+2+1   & 5 && 2+2+1+1   & 7 && 2+2+1+1+1   & 7
          \\\cmidrule{4-14}
          &   && 3+1   & 5 && 3+1+1   & 6 && 2+2+2     & 4 && 2+2+2+1     & 5
          \\\cmidrule{4-14}
          &   &&       &   && 3+2     & 5 && 3+1+1+1   & 6 && 3+1+1+1+1   & 6
          \\\cmidrule{7-14}
          &   &&       &   && 4+1     & 5 && 3+2+1     & 7 && 3+2+1+1     & 9
          \\\cmidrule{7-14}
          &   &&       &   &&         &   && 3+3       & 4 && 3+2+2       & 7
          \\\cmidrule{10-14}
          &   &&       &   &&         &   && 4+1+1     & 6 && 3+3+1       & 6
          \\\cmidrule{10-14}
          &   &&       &   &&         &   && 4+2       & 6 && 4+1+1+1     & 6
          \\\cmidrule{10-14}
          &   &&       &   &&         &   && 5+1       & 5 && 4+2+1       & 8
          \\\cmidrule{10-14}
          &   &&       &   &&         &   &&           &   && 4+3         & 5
          \\\cmidrule{13-14}
          &   &&       &   &&         &   &&           &   && 5+1+1       & 6
          \\\cmidrule{13-14}
          &   &&       &   &&         &   &&           &   && 5+2         & 6
          \\\cmidrule{13-14}
          &   &&       &   &&         &   &&           &   && 6+1         & 5
          \\\cmidrule{13-14}
 \end{tabular}
    \caption{The partitions of 3 to 7 and the ranks of the
    corresponding monoids.}
  \end{center}
\end{figure}
\begin{figure}
  \begin{center}
    \begin{tabular}{lrclrclrclrclr}
2+1 & 6 && 2+1+1 & 96  && 2+1+1+1 & 875  && 2+1+1+1+1 & 10368 && 2+1+1+1+1+1 & 151263
    \\\midrule
    &   && 2+2   & 64  && 2+2+1   & 405  && 2+2+1+1   & 3600  && 2+2+1+1+1   & 41503
    \\\cmidrule{4-14}
    &   && 3+1   & 100 && 3+1+1   & 725  && 2+2+2     & 1728  && 2+2+2+1     & 15379
    \\\cmidrule{4-14}
    &   &&       &     && 3+2     & 455  && 3+1+1+1   & 6480  && 3+1+1+1+1   & 74431
    \\\cmidrule{7-14}
    &   &&       &     && 4+1     & 1285 && 3+2+1     & 3024  && 3+2+1+1     & 27195
    \\\cmidrule{7-14}
    &   &&       &     &&         &      && 3+3       & 2916  && 3+2+2       & 12427
    \\\cmidrule{10-14}
    &   &&       &     &&         &      && 4+1+1     & 9288  && 3+3+1       & 21175
    \\\cmidrule{10-14}
    &   &&       &     &&         &      && 4+2       & 5440  && 4+1+1+1     & 88837
    \\\cmidrule{10-14}
    &   &&       &     &&         &      && 5+1       & 18756 && 4+2+1       & 40131
    \\\cmidrule{10-14}
    &   &&       &     &&         &      &&           &       && 4+3         & 30667
    \\\cmidrule{13-14}
    &   &&       &     &&         &      &&           &       && 5+1+1       & 153223
    \\\cmidrule{13-14}
    &   &&       &     &&         &      &&           &       && 5+2         & 91553
    \\\cmidrule{13-14}
    &   &&       &     &&         &      &&           &       && 6+1         & 326599
    \\\cmidrule{13-14}
 \end{tabular}
    \caption{The partitions of 3 to 7 and the sizes of the
    corresponding monoids.}\label{monoidtable}
  \end{center}
\end{figure}


\section{The rank of direct products of wreath products of symmetric groups}
\label{section-group-of-units}

If $G$ and $H$ are permutation groups,
then we denote by $G\wr H$ the \emph{wreath product} of $G$ and $H$.
As usual, if $|X|=n$, then we denote the symmetric group $S_X$ on $X$ by $S_n$;
likewise, in this case, we denote the alternating group by $A_n$.

Let $n \ge 2$ and let $\P$ be a partition with at least $2$ parts.
Then we may write $\P=\{P_1,\ldots, P_n\}$ such
that $|P_i|\le |P_j|$ when $i<j$, and $i,j\in \{1,\ldots ,n\}$.

If $f\in T(X,\P)$, then we denote by $\overline{f}\in T_n$ the
transformation whose action on $\{1,2,\ldots,n\}$ is that induced by the action
of $f$ on $X/\P$. In more details, $(i)\overline{f}=j$ whenever
$P_if\subseteq P_j$.  If $f\in S(X, \P)$, then it is clear that
$(i)\overline{f}=j$ if and only if $|P_i|=|P_j|$.

We start by stating without proof two simple results about $S(X, \P)$
and its induced action on $T(X, \P)$.

\begin{lemma}
  Let $\P$ be a partition of a set $X$ where the distinct sizes of the
  blocks are denoted $n_i$, $i=1,\ldots, k$, and $m_i$ denotes the number of
  blocks of size $n_i$.  Then the group of units $S(X,\P)$ of $T(X, \P)$ is
  isomorphic to $$(S_{n_1}\wr S_{m_1})\times\cdots\times (S_{n_k}\wr S_{m_k}).$$ \end{lemma}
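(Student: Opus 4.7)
The plan is to decompose $S(X,\P)$ first by block size and then recognize each factor as a wreath product. Because any $f \in S(X, \P)$ is a bijection of $X$ that sends blocks to blocks, it must send each block to a block of the same cardinality. Hence, writing $\P_i \subseteq \P$ for the sub-collection of blocks of size $n_i$ and $X_i = \bigcup \P_i$, I would first observe that each $f \in S(X,\P)$ satisfies $X_i f = X_i$ for every $i$, so restriction gives a well-defined map
$$\varphi: S(X,\P) \to S(X_1,\P_1) \times \cdots \times S(X_k,\P_k), \quad f \mapsto (f|_{X_1}, \ldots, f|_{X_k}).$$
It is straightforward that $\varphi$ is a homomorphism; it is injective because $X = X_1 \cup \cdots \cup X_k$ is a disjoint union, and surjective because given permutations $f_i \in S(X_i,\P_i)$ their disjoint union is a partition-preserving permutation of $X$.

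Next, for each $i$, I would show $S(X_i,\P_i) \cong S_{n_i} \wr S_{m_i}$. Fix an enumeration of the blocks $P_{i,1}, \ldots, P_{i,m_i}$ in $\P_i$, each of size $n_i$, and pick identifications $P_{i,j} \leftrightarrow \{1,\ldots,n_i\}$. An element $f \in S(X_i,\P_i)$ induces a permutation $\sigma \in S_{m_i}$ of the indices (determined by $P_{i,j}f = P_{i,(j)\sigma}$) and, for each $j$, a bijection $P_{i,j} \to P_{i,(j)\sigma}$, which, via the chosen identifications, yields an element $\tau_j \in S_{n_i}$. The assignment $f \mapsto ((\tau_1,\ldots,\tau_{m_i}), \sigma)$ gives a bijection onto $S_{n_i}^{m_i} \rtimes S_{m_i} = S_{n_i} \wr S_{m_i}$; verifying that it respects composition is a routine check of the wreath-product multiplication rule, using that $(fg)|_{P_{i,j}}$ is composed of $f|_{P_{i,j}}$ followed by $g|_{P_{i,(j)\sigma_f}}$.

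The main (minor) obstacle is merely bookkeeping: one must be careful with the convention that transformations act on the right and are composed left-to-right, so that the semidirect-product twisting in $S_{n_i} \wr S_{m_i}$ matches the order in which the local factors are applied. Since both isomorphisms are standard and the result is stated without proof in the paper, no deeper argument is needed; combining the two steps yields
$$S(X,\P) \;\cong\; \prod_{i=1}^{k} S(X_i,\P_i) \;\cong\; \prod_{i=1}^{k} \bigl(S_{n_i} \wr S_{m_i}\bigr),$$
as claimed.
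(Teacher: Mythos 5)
Your proposal is correct: the paper states this lemma without proof as a routine fact, and your two-step argument (split $X$ into the unions $X_i$ of blocks of a fixed size $n_i$, then identify each $S(X_i,\P_i)$ with $S_{n_i}\wr S_{m_i}$ via a block permutation together with local bijections) is exactly the standard verification it has in mind. The only point worth making explicit is that for a unit $f$ the defining condition $Pf\subseteq Q$ actually forces $Pf=Q$ with $|P|=|Q|$ (by bijectivity and a counting argument over the finitely many blocks), which is what legitimises the phrase ``sends blocks to blocks of the same cardinality'' and hence the restriction map $\varphi$.
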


If $f$ is a transformation of a set $X$, then the \emph{image} of $f$
is the set $$\im(f)=\{(x)f:x\in X\}$$ and the \emph{kernel} of $f$
is the equivalence relation $$\ker(f)=\{(x,y)\in X\times X:(x)f=(y)f\},$$
the classes of this relation are referred to as \emph{kernel classes}.
If $Y$ is a subset of $X$, then the \emph{restriction} of $f$ to $Y$ is denoted
by $f|_Y$.

\begin{lemma} \label{lem:Gaction}

For every block $P$ of $\P$ and $f \in T(X, \P)$, let $P_{f}$ be the multiset
of sizes of blocks in the kernel of $f|P_i$. For every $i,j$ such that $\P$ has
blocks of sizes $i$ and $j$ (not necessarily distinct), let $J_{i,j,f}$ be the
multiset of all $P_f$ such that $|P|=i$ and $(P)f$ is contained in a block of size
$j$. Then $g \in S(X,\P) f S(X,\P)$ if and only if $J_{i,j,f}=J_{i,j,g}$ for all pairs
$(i,j)$.
\end{lemma}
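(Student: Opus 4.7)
The plan is to prove the equivalence by treating each direction separately. The forward implication (that $g = sft$ with $s,t \in S(X, \P)$ forces $J_{i,j,f} = J_{i,j,g}$) is essentially a direct computation, whereas the converse requires a combinatorial matching argument that recovers $s$ and $t$ from the equal multisets.

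For the forward direction, fix a block $P$ of size $i$ and factor $g|_P = (s|_P)(f|_{Ps})(t|_{(Ps)f})$. Because $s|_P\colon P\to Ps$ is a bijection and $t$ is injective, $\ker(g|_P)$ and $\ker(f|_{Ps})$ coincide as partitions up to the relabelling $s|_P$; in particular $P_g=(Ps)_f$. Moreover $(P)g$ lies in a block $Q$ iff $(Ps)f$ lies in $Qt^{-1}$, which is a block of the same size as $Q$. Hence within each size class the bijection $P\mapsto Ps$ matches the source blocks contributing to $J_{i,j,g}$ with those contributing to $J_{i,j,f}$ and preserves the kernel-type entry, so the two multisets agree.

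For the backward direction, the hypothesis $J_{i,j,f}=J_{i,j,g}$ supplies, for each triple $(i,j,K)$ with $K$ a partition of $i$, a bijection between the size-$i$ blocks whose $f$-image lies in a size-$j$ block with kernel type $K$, and the corresponding blocks for $g$. Combining these over $(j,K)$ gives, for each $i$, a bijection of the size-$i$ blocks of $\P$, which lifts to $s\in S(X,\P)$ by choosing, on each matched pair $(Q,P)$, a bijection $Q\to P$ sending $\ker(g|_Q)$ to $\ker(f|_P)$ (possible because their part-size multisets agree). Then $\ker((sf)|_Q)=\ker(g|_Q)$ for every block $Q$, so the rule $(x)sf\mapsto (x)g$ is a well-defined partial injection on $\im(sf)$, and it remains to extend it to a block-preserving permutation $t\in S(X,\P)$ realising $g=sft$.

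The main obstacle is ensuring this $t$ is globally defined and lies in $S(X,\P)$: it must send each target block to a single target block of the same size, and on points outside $\im(sf)$ must extend to a bijection with matching cardinalities on each block. The delicate case is when several source blocks share a target block under $sf$ while their matched counterparts for $g$ lie in distinct target blocks; then the block-matching defining $s$ must be chosen compatibly with the target-block incidence structure, which is the information that must be teased out of the equal multisets $J_{i,j,\cdot}$. I expect the bulk of the technical work to lie in organising this matching, for instance by iterating over the pairs $(i,j)$ and refining the assignment of source blocks to $f$-blocks, so that the induced map on target blocks is a well-defined, size-preserving bijection.
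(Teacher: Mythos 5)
Your forward direction is correct, and it is essentially the only available argument: conjugating by $s$ identifies $\ker(g|_P)$ with $\ker(f|_{(P)s})$, and post-composing with $t$ preserves the size of the block containing the image, so $P\mapsto (P)s$ matches the contributions to $J_{i,j,g}$ with those to $J_{i,j,f}$. (The paper states this lemma without proof, so there is nothing on its side to compare against.)

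The backward direction, however, contains a gap that cannot be closed, because that implication is false as stated. The multisets $J_{i,j,\cdot}$ record, for each source block, only its kernel type and the \emph{size} of the block containing its image; they do not record which source blocks share an image block, nor how the images of such blocks overlap inside it. Both of these are genuine invariants of the double coset $S(X,\P)fS(X,\P)$ --- for instance they determine $|\im(f)|$, which is constant on the double coset. Concretely, take $X=\{1,2,3,4\}$ and $\P=\{P_1,P_2\}$ with $P_1=\{1,2\}$, $P_2=\{3,4\}$; let $f=\id_X$ and let $g$ send $1\mapsto 3$, $2\mapsto 4$ and fix $3$ and $4$. Both maps are injective on each block and send each block into a block of size $2$, so $J_{2,2,f}=J_{2,2,g}=\{\{1,1\},\{1,1\}\}$ and every other $J_{i,j,\cdot}$ is empty; yet $|\im(f)|=4\neq 2=|\im(g)|$, so $g\notin S(X,\P)fS(X,\P)$. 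In your write-up the failure occurs exactly where you flag it: after choosing $s$ blockwise you only obtain $\ker((sf)|_Q)=\ker(g|_Q)$ \emph{within} each block, so the rule $(x)sf\mapsto(x)g$ need be neither well defined nor injective across blocks (in the example above it is two-to-one), and the ``information that must be teased out of the equal multisets'' to repair the matching is simply not present in them. A correct statement requires either a finer invariant (recording, for each image block, the family of image sets of the source blocks mapping into it, up to the action of $S(X,\P)$) or an additional hypothesis on $f$ and $g$ forcing that extra structure to agree --- which is what in fact holds in the specific situations where the paper later invokes this lemma.
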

For example, let $X=\{1, \dots,8\}$, $\P=\{P,Q\}$, $P=\{1,2,3,4\}, Q=\{5,6,7,8\}$,
and define $f \in T(X,\P)$ by $(1)f=2$, $(3)f=4$, and $(x)f=x$ for
$x \ne 1,3$. Then $P_f=\{2,2\}$, $Q_f=\{1,1,1,1\}$, and $J_{4,4,f}= \{ \{2,2\},
\{1,1,1,1\}\} $.

If $g \in T(X,\P)$ is given by $(1)g=2$, $(5)g=6$, and $(x)g=x$ for $x \ne 1,5$,
then $P_g=\{2,1,1\}=Q_g$, and $J_{4,4,g}= \{ \{2,1,1\}, \{2,1,1\}\} $. Hence $g
\not\in S(X,\P) f S(X,\P)$. Note that $f$ and $g$ have the same multiset of
sizes of kernel classes.

We recall also one of the main theorems in \cite{as}.

\begin{theorem}\label{wrth}
  If $X$ is a finite set such that $|X|\geq 3$ and $\P$ is a uniform partition
  of $X$, then $\sym{X,\P}$ is generated by two elements.
\end{theorem}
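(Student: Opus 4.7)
The plan is to invoke the first lemma of this section, which identifies $\sym{X,\P}$ with the wreath product $S_n \wr S_m$ when $\P$ is uniform with $m$ blocks of size $n$. The task then reduces to showing that $S_n \wr S_m$ is $2$-generated whenever $nm = |X| \geq 3$.

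The degenerate cases dispose of themselves: if $n = 1$ then $S_n \wr S_m = S_m$ with $m \geq 3$, and if $m = 1$ then $S_n \wr S_m = S_n$ with $n \geq 3$; in both, a transposition together with a long cycle generates. So I may assume $n, m \geq 2$.

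For the principal case, write $G := S_n \wr S_m = S_n^m \rtimes S_m$ and propose explicit generators of the shape
$$\alpha = \bigl((1\;2),\,e,\,\ldots,\,e;\,\tau\bigr), \qquad \beta = \bigl((1\;2\;\cdots\;n),\,e,\,\ldots,\,e;\,\mu\bigr),$$
with $\tau,\mu$ chosen so that $\genset{\tau,\mu} = S_m$, for instance $\tau = (1\;2)$ and $\mu = (1\;2\;\cdots\;m)$. Set $H := \genset{\alpha,\beta}$. The projection $G \to S_m$ sends $\alpha,\beta$ to $\tau,\mu$, so $H$ already surjects onto $S_m$. It therefore remains to show $H \supseteq N := S_n^m$. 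Words in $\alpha,\beta$ whose $S_m$-component is trivial---suitable powers and commutators of the two generators, for example $\beta^m$ or $[\alpha,\beta^k]$---lie in $N$ and each is supported on a controllable set of coordinates, the entries being words in $(1\;2)$ and $(1\;2\;\cdots\;n)$ and hence capable of generating $S_n$; conjugating by $\beta$ then shifts these coordinates according to $\mu$, so iteration populates every coordinate with a full copy of $S_n$.

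The main obstacle is closing this last step rigorously: $N$ carries several proper $H$-invariant subgroups---the diagonal copy of $S_n$, the kernel of the total sign character $(g_1,\ldots,g_m) \mapsto \prod \mathrm{sgn}(g_i)$, subgroups arising from imprimitivity blocks of $\mu$, and so on---and one must exclude that $H \cap N$ is trapped in any of them. A conceptual way is Scott's generation criterion, which converts $2$-generation of $G$ into a condition on the chief factors of $N$ viewed as a $G$-group; this is the representation-theoretic input promised in the introduction. Finally, small cases such as $n = 2$ or $m = 2$ may force minor parity adjustments to $\tau$ and $\mu$ (for example when $n = m = 2$ the two generators above coincide), but the strategy is unchanged.
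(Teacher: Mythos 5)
You have not given a proof: the decisive step, showing that $\langle\alpha,\beta\rangle$ contains the base group $S_n^m$, is exactly the content of the theorem, and your text first gestures at it (``iteration populates every coordinate'') and then explicitly concedes it is open, deferring to an unapplied mention of Scott's criterion. Worse, the specific generators you propose fail for an infinite family of cases, not just the ``small'' ones you flag. Consider the surjection $S_n\wr S_m\to C_2\times C_2$ given by $(g_1,\ldots,g_m;\sigma)\mapsto\bigl(\prod_i \operatorname{sgn}(g_i),\operatorname{sgn}(\sigma)\bigr)$, valid for all $n,m\ge 2$. Your $\alpha$ maps to $(-1,-1)$, while $\beta$ maps to $\bigl((-1)^{n-1},(-1)^{m-1}\bigr)$; whenever $n$ and $m$ are both even these images coincide, so $\langle\alpha,\beta\rangle$ is contained in a proper subgroup and cannot be all of $S_n\wr S_m$ (e.g.\ $n=2,m=4$ or $n=m=4$, not only $n=m=2$). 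Thus the ``minor parity adjustments'' are not a peripheral remark: choosing base and top parts whose signs generate the full $C_2\times C_2$ quotient, and then proving that no other proper subgroup (diagonal copies of $S_n$, the kernel of the product-of-signs map on the base, subgroups adapted to the block structure) can contain the pair, is where all the work lies, and none of it is carried out.

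For context, the present paper does not prove Theorem~\ref{wrth} at all: it is quoted verbatim from \cite{as}, where the two-generation of $S_n\wr S_m$ is established by a representation-theoretic analysis (in the spirit of Lemma~\ref{replem}, studying the permutation module of $S_m$ over $\F_2$ and the relevant quotients), precisely in order to control the invariant subgroups you list as ``the main obstacle''. So your overall strategy (reduce to $S_n\wr S_m$, pick one element with a transposition-like base entry and one with a long-cycle base entry over a generating pair of $S_m$) is compatible with the known proof, but as submitted it is an outline with a genuine gap at its core and a concrete error in the stated generating pair.
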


The following lemma is well-known; see, for instance,~\cite[Lemma~5.3.4]{kl}.

\begin{lemma}\label{replem}
  The permutation module $V$ of the symmetric group $S_n$ on an $n$-element set
  over a field $\F$ of characteristic $p$ has precisely two proper non-trivial
  submodules:
  \begin{eqnarray*}
    U_1&=&\left\{(a,a,\ldots,a)\ :\ a\in \F\right\}\quad\mbox{and}\\
    U_2&=&\left\{(a_1,\ldots,a_n)\ :\ a_1+\cdots+a_n=0 \right\}.
  \end{eqnarray*}
  Furthermore, if $p$ divides $n$, then $U_1\leq U_2$; otherwise $V=U_1\oplus
  U_2$.
\end{lemma}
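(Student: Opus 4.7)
The plan is to first check that $U_1$ and $U_2$ are genuine $S_n$-submodules (immediate: $S_n$ fixes constant vectors pointwise and permutes coordinates, so it preserves the sum of coordinates), compute $\dim U_1 = 1$ and $\dim U_2 = n-1$ (the latter is the kernel of the sum functional $V \to \F$), and then read off the dichotomy: if $p \mid n$ then $\mathbf{1}=(1,\ldots,1)$ has sum $0$ in $\F$, so $U_1 \leq U_2$; otherwise $U_1 \cap U_2 = 0$, and since the dimensions add to $n$ we get $V = U_1 \oplus U_2$.

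The core of the proof is to show that any proper nonzero submodule $W \leq V$ equals $U_1$ or $U_2$. I would split into two cases according to whether $W \subseteq U_1$. If so, then $W = U_1$ by one-dimensionality of $U_1$. Otherwise, pick $w=(a_1,\ldots,a_n) \in W$ with $a_i \neq a_j$ for some pair $i,j$; then $w - (i\,j)\cdot w = (a_i-a_j)(e_i-e_j) \in W$ is a nonzero scalar multiple of $e_i - e_j$. Acting by further elements of $S_n$ produces a nonzero scalar multiple of $e_k - e_l$ for every pair $k \neq l$, and since $\{e_k - e_n : 1 \leq k < n\}$ is a basis of $U_2$, we conclude $W \supseteq U_2$. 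Finally $\dim V/U_2 = 1$ forces $W = U_2$ or $W = V$, and the latter is excluded by properness.

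Both $U_1$ and $U_2$ are proper and nontrivial for $n \geq 2$ (trivially: $\mathbf{1} \neq 0$, $e_1 - e_2 \in U_2$, and $e_1 \notin U_2$), so the classification is complete and uniform in the two characteristic cases; the lattice of submodules is a chain $0 < U_1 < U_2 < V$ when $p \mid n$ and the diamond $0 < U_1, U_2 < V$ with $V = U_1 \oplus U_2$ otherwise. I do not anticipate any serious obstacle, since the lemma is a classical property of the natural permutation module; the only step meriting care is the verification that the $S_n$-orbit of a single difference vector $e_i - e_j$ spans all of $U_2$, which is routine.
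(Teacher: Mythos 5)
Your proof is correct. The paper does not actually prove this lemma --- it is stated as well known, with a pointer to Kleidman--Liebeck --- so there is no argument in the paper to compare against; yours is the standard one and every step checks out: the difference trick $w-(i\,j)\cdot w=(a_i-a_j)(e_i-e_j)$ produces a nonzero multiple of $e_i-e_j$ whenever $W\not\subseteq U_1$, the $S_n$-orbit of $e_i-e_j$ spans $U_2$, and the dimension counts finish the classification. The only caveat, which is a defect of the lemma's statement rather than of your argument, is the degenerate case $n=2$, $p=2$, where $U_1=U_2$ and the module has precisely one, not two, proper non-trivial submodules; your proof correctly establishes in all cases that any proper non-trivial submodule equals $U_1$ or $U_2$, which is all that the paper uses.
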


\begin{theorem}\label{th:SXP}
  Let $n_1,\ldots,n_k, m_1,\ldots,m_k ,l_1,\ldots,l_u$ be integers such that they are all at least $2$ and let
  $$W=(S_{n_1}\wr S_{m_1})\times\cdots\times (S_{n_k}\wr S_{m_k})\times
  S_{l_1}\times\cdots\times S_{l_u}.$$
  If $W\not\cong S_2$, then the rank of $W$ is $\max\{2,\ 2k+u\}$.
\end{theorem}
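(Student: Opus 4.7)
The plan is to establish the two bounds $d(W)\geq \max\{2,2k+u\}$ and $d(W)\leq \max\{2,2k+u\}$ separately. For the lower bound, I would pass to the abelianization $W^{\mathrm{ab}}$. Each wreath-product factor $S_{n_i}\wr S_{m_i}$, with $n_i,m_i\geq 2$, admits two independent homomorphisms to $\mathbb{Z}/2$: the sign of the induced permutation in $S_{m_i}$, and the total sign of the base entries in $(S_{n_i})^{m_i}$, well defined on the wreath product because the $S_{m_i}$-action permutes the coordinates and preserves the coordinate sum. Applying Lemma \ref{replem} to the permutation module $\F_2^{m_i}$ of $S_{m_i}$ in characteristic $2$, the subspace of permutation differences is exactly the augmentation submodule $U_2$, so the $S_{m_i}$-coinvariants of the base abelianization are one-dimensional. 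Hence each wreath factor contributes $(\mathbb{Z}/2)^2$ to the abelianization, each $S_{l_j}$ contributes a further $\mathbb{Z}/2$ via its sign character, and so $W^{\mathrm{ab}}\cong(\mathbb{Z}/2)^{2k+u}$. Since any generating set of $W$ projects to a generating set of $W^{\mathrm{ab}}$, this forces $d(W)\geq 2k+u$. Moreover, $W$ is non-cyclic: cyclicity would require $k=0$ and every $l_j=2$, making $W\cong(\mathbb{Z}/2)^u$, which is cyclic only for $u=1$, i.e., $W\cong S_2$, excluded by hypothesis. Hence $d(W)\geq \max\{2,2k+u\}$.

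For the upper bound, the cases with $2k+u\leq 2$ reduce to $W$ being a single wreath product (covered by Theorem \ref{wrth}), a single $S_l$ (classical $d(S_l)\leq 2$), or $W=S_{l_1}\times S_{l_2}$ with $l_1,l_2\geq 2$ (classical 2-generation). In the main case $r:=2k+u\geq 3$, I plan to construct $r$ generators explicitly. Using Theorem \ref{wrth} and the 2-generation of symmetric groups, I select 2-element generating sets for each direct factor of $W$, and assemble them into two diagonal elements $\omega_1,\omega_2\in W$ whose projections onto each factor give those generating pairs; thereby $\langle\omega_1,\omega_2\rangle$ surjects onto every direct factor. I then adjoin $r-2$ further elements $\omega_3,\ldots,\omega_r$, each supported on a single direct factor, chosen so that the images of $\omega_1,\ldots,\omega_r$ in $W^{\mathrm{ab}}\cong\F_2^{2k+u}$ form a basis.

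The main technical point will be to verify that $H:=\langle\omega_1,\ldots,\omega_r\rangle$ actually equals $W$, rather than being a proper subdirect subgroup that merely happens to surject onto $W^{\mathrm{ab}}$ and onto every direct summand. By Goursat's lemma, a proper subdirect subgroup would be controlled by isomorphisms between proper quotients of distinct direct factors. Non-abelian common chief factors can be ruled out by choosing the generating pairs so that, on any pair of summands with isomorphic non-abelian chief factors, the induced actions are inequivalent---a freedom that Theorem \ref{wrth} affords. The only remaining common quotients are the $\mathbb{Z}/2$ sign quotients, and these are precisely killed by the basis condition on $W^{\mathrm{ab}}$; a Gasch\"utz-type lifting argument (or a direct Goursat analysis) then yields $H=W$, completing the upper bound and establishing $d(W)=\max\{2,2k+u\}$.
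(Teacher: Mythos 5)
Your lower bound is sound and is essentially the paper's own argument in different clothing: the paper exhibits a normal subgroup $N$ with $W/N\cong (C_2)^{2k+u}$ (built from $(A_{n_i})^{m_i}$, the augmentation submodule $U_2$ of Lemma~\ref{replem}, and $A_{m_i}$), which is precisely the computation of $W^{\mathrm{ab}}$ you carry out, and the non-cyclicity observation is the same. The upper bound is where you diverge, and where there is a genuine gap. Your plan rests on the claim that the only obstructions to $H=\langle\omega_1,\ldots,\omega_r\rangle$ being all of $W$ are (a) common $C_2$-quotients of distinct summands, killed by the basis condition in $W^{\mathrm{ab}}$, and (b) isomorphic non-abelian chief factors, to be killed by choosing ``inequivalent'' generating pairs --- ``a freedom that Theorem~\ref{wrth} affords.'' Theorem~\ref{wrth} affords no such freedom: it asserts only that \emph{one} generating pair of $S_{n}\wr S_{m}$ exists, and says nothing about the existence of two generating pairs in different $\mathrm{Aut}(S_{n}\wr S_{m})$-orbits, which is what your plan needs when two summands are isomorphic (the same issue already arises for $S_l\times S_l$, where your appeal to ``classical 2-generation'' also needs an argument). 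Moreover, the reduction of the $s$-factor subdirect analysis to pairwise Goursat data is asserted rather than proved (it fails for general groups, e.g.\ for $(C_2)^3$). So as written the decisive step of the upper bound is missing.

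The gap is repairable, but by a different observation from the one you propose. If $H$ is subdirect in $G'\times G_s$ and surjects onto $(G'\times G_s)^{\mathrm{ab}}$, then the Goursat common quotient $Q$ must be perfect: the image of $H$ in $Q^{\mathrm{ab}}\times Q^{\mathrm{ab}}$ is a graph, so surjectivity onto the abelianization forces $Q^{\mathrm{ab}}=1$. One then checks that neither $S_l$ nor $S_{n}\wr S_{m}$ (with all parameters at least $2$) has a nontrivial perfect quotient: for a normal subgroup not containing the base $(S_n)^m$ the quotient has a nontrivial abelian image coming from $(S_n)^m$, and $S_m$ itself has no nontrivial perfect quotient. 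Hence $Q=1$, and induction on the number of factors shows that surjecting onto every summand and onto $W^{\mathrm{ab}}$ already forces $H=W$; your concern about non-abelian chief factors is then vacuous, and no choice of ``inequivalent'' pairs is needed. By contrast, the paper sidesteps all of this with elementary explicit constructions: the wreath part is generated by $2k$ elements simply by taking the union of generating sets of the factors; the case $u=1$ is handled by a normal-closure argument (the subgroup $\langle u,v,w\rangle\cap S_{l_1}$ is normal in $S_{l_1}$ and contains a transposition, hence equals $S_{l_1}$); and the case $u\geq 2$ uses a cyclic chain of elements, each supported on two symmetric factors, exploiting odd-order cycles. You should either prove the perfect-quotient lemma and the reduction to pairs, or adopt explicit generators in the paper's style.
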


\begin{proof}
  Let us assume that $W\not\cong C_2$. If $2k+u<2$, then $k=0$ and $u=1$. In
  this case, $W=S_{l_1}$ is not isomorphic to $S_2$, and the rank of $W$ is 2.

  Let us show that $W$ cannot be generated by fewer than $2k+u$ elements.  Let
  $i\in\{1,\ldots,k\}$. Then $(A_{n_i})^{m_i}$ is a normal subgroup of
  $S_{n_i}\wr S_{m_i}$ and the quotient $Q$ is isomorphic to $C_2\wr
  S_{m_i}=(C_2)^{m_i}\rtimes S_{m_i}$.  Then $(C_2)^{m_i}$ can be viewed as the
  natural permutation module for $S_{m_i}$ over $\F_2$. If $U_2$ denotes the
  $S_{m_i}$-submodule
    of $(C_2)^{m_i}$ defined in Lemma~\ref{replem}, then $U_2$
  is a normal subgroup of $Q$ and the quotient is isomorphic to $C_2\times
  S_{m_i}$. Now $A_{m_i}$ is a normal subgroup of $C_2\times S_{m_i}$ and the
  quotient is isomorphic to $C_2\times C_2$. Therefore we have proved that the
  wreath product $S_{n_i}\wr S_{m_i}$ has a normal subgroup $N_i$ such that the
  quotient is isomorphic to $C_2\times C_2$. Now, for $i\in\{1,\ldots,u\}$, the
  subgroup $A_{l_i}$ normal in $S_{l_i}$ and the quotient is isomorphic to
  $C_2$. Therefore the subgroup $$ N=N_1\times\cdots\times N_k\times
  A_{l_1}\times\cdots\times A_{l_u} $$ is a normal subgroup of $W$ such that
  $W/N$ is isomorphic to $(C_2)^{2k+u}$.  If $W$ can be generated by fewer than
  $2k+u$ elements, then so can $W/N\cong (C_2)^{2k+u}$. However, the smallest
  generating set of $(C_2)^{2k+u}$ has $2k+u$ elements, and so the assertion is
  verified.

  Next we show that $W$ can indeed be generated by $2k+u$ elements.
  Set
  $$
  W_1=(S_{n_1}\wr S_{m_1})\times\cdots\times (S_{n_k}\wr
  S_{m_k})\quad\mbox{and}\quad W_2=S_{l_1}\times\cdots\times S_{l_u}.
  $$
  Then $W=W_1\times W_2$.
  Since $W_1$ is the direct product of $k$ groups each of which is generated by
  two elements (Theorem~\ref{wrth}), we obtain that $W_1$
  can be generated by $2k$ elements. For $u=0$ the theorem is thus proved.

  Suppose that $u=1$. If $k=0$ then $S_{l_1}$ can be generated by $2$ elements
  and there is nothing to prove. Suppose that $k\geq 1$ and consider the group
  $H=(S_{n_k}\wr S_{m_k})\times S_{l_1}$. By the argument in the previous
  paragraph, it suffices to show that $H$ is generated by $3$ elements. Let $x$
  and $y$ be the generators of $S_{n_k}\wr S_{m_k}$ given in Theorem~\ref{wrth}.
  Set $u=(x,\id)$, $v=(y,(1,2,\ldots,l_1))$, and $w=(\id,(1,2))$. Then $u,\ v,\
  w\in H$ and we claim that $H=\gen{u,v,w}$. Since the first components of $u,\
  v,\ w$ generate $S_{n_k}\wr S_{m_k}$ and the second components generate
  $S_{l_1}$, we have that $\gen{u,v,w}$ is a subdirect subgroup of
  $H=(S_{n_k}\wr S_{m_k})\times S_{l_1}$. Set $N=\gen{u,v,w}\cap S_{l_1}$. For
  each $u_2\in  S_{l_1}$ there is some $u_1\in S_{n_k}\wr S_{m_k}$ such that
  $(u_1,u_2)\in \gen{u,v,w}$. If $n\in N$ then
  $(\id,n)^{(u_1,u_2)}=(\id,n^{u_2})$ is an element of $N=\gen{u,v,w}\cap
  S_{l_1}$, and this shows that $N$ is a normal subgroup of $S_{l_1}$. As
  $(1,2)\in N$ and no proper normal subgroup of $S_{l_1}$ contains the
  transposition $(1,2)$, we find that $N=S_{l_1}$, and, in turn, that
  $S_{l_1}\leq \gen{u,v,w}$. As $\gen{u,v,w}$ is subdirect, we also obtain
  $S_{n_k}\wr S_{m_k}\leq \gen{u,v,w}$, and so $H=\gen{u,v,w}$. Thus shows that
  $H$ is generated by three elements, and so $W$ is generated by $2k+1$
  elements, as required.

  Suppose now that $u\geq 2$. In this case, as $W_1$ is generated by $2k$
  elements, we only need to show that $W_2$ is generated by $u$ elements.
  Let $i\in\{1,\ldots,u\}$. If $l_i$ is even, then set $z_i=(2,\ldots,l_i)$ otherwise
  set $z_i=(1,\ldots,l_i)$. Therefore $z_i$ is always a cycle of odd length such
  that $S_{l_i}=\gen{(1,2),z_i}$.  For $i\in\{1,\ldots,u-1\}$ define
  $$
    w_i=(\id,\ldots,\id,\stackrel{\mbox{\footnotesize $i$-th
        component}}{(1,2)},\stackrel{\mbox{\footnotesize $(i+1)$-th
        component}}{z_{i+1}},\id,\ldots,\id)\in W_2
  $$
  and also define
  $$
    w_u=(z_1,\id,\ldots,\id,(1,2))\in W_2.
  $$
  We claim that $W_2=\gen{w_1,\ldots,w_u}$.
  Let $o_i$ denote the order of $z_i$.
  As $o_i$ is odd, all but the $i$-th component of
  $w_i^{o_i}$ is trivial, and the $i$-th component is $(1,2)$. If
  $i\in\{1,\ldots,u-1\}$, then in $w_i^{2}$, all but
  the $(i+1)$-th component is trivial, and the $(i+1)$-th component is
  $z_{i+1}^2$.
  Similarly, in $w_u^{2}$ all but the first component is
  trivial, and the first component is $z_1^2$. Therefore,
  for $i\in\{1,\ldots,u\}$, we  obtain that the elements $(1,2),\ z_i^2\in
  S_{l_i}$ are contained in $\gen{w_1,\ldots,w_u}$. Since, the order of $z_i$ is
  odd, $z_i^2$ is a cycle of the same length as $z_i$ permuting the same
  points. Therefore $\gen{(1,2),z_i^2}=S_{l_i}$, which shows that $S_{l_i}\leq
  \gen{w_1,\ldots,w_u}$. Since this is true for all $i$, we obtain that $W_2\leq
  \gen{w_1,\ldots,w_u}$,  and the proof is complete.
\end{proof}
\begin{cor}
Let $\P$ be a partition on $X$, such that $\P$ has exactly $m_i \ge 2$ blocks
  of size $n_i \ge 2$, $i=1, \dots, p$, blocks of unique sizes
  $l_1, \dots , l_q$, where $l_i \ge 2$, and $t$ singleton blocks (where
  $p$, $q$, $t$ might be $0$).  If $|S(X, \P)| \ge 3$  then the rank of $S(X, \P)$ is
  $$\max\{2,2p+q+g(t)\},$$
where $g(0)=g(1)=0$ and $g(t)=1$ for $t \ge 2$.
\end{cor}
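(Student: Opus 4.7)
My plan is to deduce the corollary as a direct consequence of Theorem \ref{th:SXP} together with the structural description of $S(X,\P)$ given in the first (unnumbered in this excerpt) lemma of this section. The key observation is that the three categories of blocks appearing in the hypothesis --- the $p$ repeated non-singleton sizes, the $q$ unique non-singleton sizes, and the $t$ singleton blocks --- correspond respectively to the three types of factors considered in Theorem \ref{th:SXP}: honest wreath products $S_{n_i}\wr S_{m_i}$ with $n_i,m_i\ge 2$; degenerate wreath products $S_{l_j}\wr S_1\cong S_{l_j}$ with $l_j\ge 2$; and the single extra factor $S_1\wr S_t\cong S_t$ coming from the singleton blocks.

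First I would rewrite the conclusion of the structure lemma, grouped by these three types, as
$$S(X,\P)\cong \prod_{i=1}^{p}(S_{n_i}\wr S_{m_i})\times \prod_{j=1}^{q}S_{l_j}\times S_t,$$
where the last factor is trivial when $t\in\{0,1\}$ and is a genuine symmetric group on $t\ge 2$ points otherwise. I would then split into two cases. If $t\le 1$, Theorem \ref{th:SXP} applies with its parameters $k=p$ and $u=q$, yielding rank $\max\{2,2p+q\}$. If $t\ge 2$, the extra factor $S_t$ is a symmetric group on at least two points, so Theorem \ref{th:SXP} applies with $k=p$ and $u=q+1$, yielding rank $\max\{2,2p+q+1\}$. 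In either case the value coincides with $\max\{2,2p+q+g(t)\}$ by the definition of $g$, which is exactly the claimed formula. The hypothesis $|S(X,\P)|\ge 3$ is used only to rule out $S(X,\P)\cong S_2$, so that Theorem \ref{th:SXP} is indeed applicable.

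Since the heavy lifting has been done in Theorem \ref{th:SXP}, no genuine obstacle remains. The only subtle point is the bookkeeping for singleton blocks: they contribute a single extra symmetric-group factor (rather than a wreath-product factor) and this factor is trivial precisely when $t\le 1$, which is the phenomenon encoded by $g$.
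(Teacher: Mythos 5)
Your argument is correct and is essentially identical to the paper's own proof: both split into the cases $t\le 1$ and $t\ge 2$, identify $S(X,\P)$ with the appropriate direct product via the structure lemma, and apply Theorem~\ref{th:SXP} with $k=p$ and $u=q$ or $u=q+1$ respectively. No further comment is needed.
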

\begin{proof}
If $t=0$ or $t=1$ then $S(X,\P)$ is isomorphic to
$$(S_{n_1}\wr S_{m_1})\times\cdots\times (S_{n_p}\wr S_{m_p})\times
  S_{l_1}\times\cdots\times S_{l_q}$$
and we may take $k=p$ and $u=q$ in Theorem \ref{th:SXP}. If $t\ge 2$ then $S(X,\P)$ is isomorphic
to
$$(S_{n_1}\wr S_{m_1})\times\cdots\times (S_{n_p}\wr S_{m_p})\times
  S_{l_1}\times\cdots\times S_{l_q} \times S_t,$$
and the statement follows from Theorem \ref{th:SXP} with $k=p$, $u=q+1$.
\end{proof}

\section{The Relative rank of $T(X,\P)$ modulo $\Sigma(X,\P)$}
\label{section-rel-rank-1}

Let $\mathcal{A}$ denote the collection of those $f\in T(X, \P)$ such that
\begin{enumerate}[(i)]
  \item $f|_{P_i}$ is injective for all $i$;
  \item $|\im(\overline{f})|=n-1$;
\end{enumerate}
Note that, by (i), if $(P_i)f\subseteq P_j$ and $|P_i|\not=|P_j|$, then
$|P_i|<|P_j|$.

\begin{lemma} \label{lemma-rel-rank}
  Let $f, g, a\in T(X, \P)$ be arbitrary. Then the following hold:
  \begin{enumerate}[\rm (i)]
    \item if $f\in \mathcal{A}$, $a\in \Sigma(X, \P)$, and $f=ag$, then $a\in
      S(X, \P)$;

       \item if $f\in \mathcal{A}$, $g\not\in \Sigma(X, \P)$, and $f=ga$, then
      $g\in \mathcal{A}$.

    \item if $f, g\in \mathcal{A}$ and $\overline{f}=\overline{g}\overline{a}$, then there
      exist unique $i,j\in \{1,\ldots, n\}$  such that $i\not=j$ and
      $(i)\overline{f}=(j)\overline{f}=(i)\overline{g}=(j)\overline{g}$;

  \end{enumerate}
\end{lemma}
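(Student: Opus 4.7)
All three parts of the lemma are consequences of how the homomorphism $f \mapsto \overline{f}$ on $T(X, \P)$ interacts with composition and with the two defining conditions of $\mathcal{A}$, namely blockwise injectivity and $|\im(\overline{f})| = n-1$. I plan to handle the three parts in turn.

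For part (i), I would first note that the hypothesis $a \in \Sigma(X,\P)$ forces $\overline{a}$ to be surjective on $\{1,\ldots,n\}$, hence bijective by finiteness. From $f = ag$ and the injectivity of $f|_{P_i}$, I would read off that $a|_{P_i}$ must be injective for each $i$: otherwise two distinct points of $P_i$ with the same image under $a$ would also have the same image under $f$. Combined with $\overline{a}$ being a bijection of blocks, a point count then forces $a$ to be a bijection of $X$ and each $a|_{P_i}$ to map $P_i$ bijectively onto $P_{(i)\overline{a}}$. In particular $|P_i| = |P_{(i)\overline{a}}|$, so $a \in S(X, \P)$.

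For part (ii), applying the overline map to $f = ga$ gives $\overline{f} = \overline{g}\,\overline{a}$, so $\im(\overline{f}) = (\im(\overline{g}))\overline{a}$ has cardinality at most $|\im(\overline{g})|$. I would then deduce $|\im(\overline{g})| \ge n - 1$, while $g \notin \Sigma(X, \P)$ forces $|\im(\overline{g})| \le n - 1$; so $|\im(\overline{g})| = n - 1$. The injectivity of each $g|_{P_i}$ follows from $f|_{P_i} = g|_{P_i} \cdot a$ together with the injectivity of $f|_{P_i}$, exactly as in (i). Both conditions defining $\mathcal{A}$ then hold for $g$.

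For part (iii), since $f, g \in \mathcal{A}$, both $\overline{f}$ and $\overline{g}$ have image of size $n - 1$; each therefore admits a unique unordered pair of distinct indices collapsed to a common value. My goal is to show these two pairs coincide. From $\overline{f} = \overline{g}\,\overline{a}$ one gets $\im(\overline{f}) = (\im(\overline{g}))\overline{a}$, and because both sides have cardinality $n - 1$, the restriction $\overline{a}|_{\im(\overline{g})}$ must be injective. Hence $(i)\overline{f} = (j)\overline{f}$ holds if and only if $(i)\overline{g} = (j)\overline{g}$, which identifies the unique coincidence pairs.

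The only genuinely delicate point I foresee is the step in (iii): one must observe that the equality of image sizes $n - 1$ across the factorization forces $\overline{a}$ to act injectively on $\im(\overline{g})$. This is a short counting remark; the remaining arguments are routine bookkeeping with the map $\overline{\cdot}$ and blockwise restrictions.
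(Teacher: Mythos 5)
Your proposal is correct and follows essentially the same route as the paper: for (i) and (ii) the kernel containments $\ker(a)\subseteq\ker(f)$ and $\ker(g)\subseteq\ker(f)$ combined with blockwise injectivity of $f$ and the image-size count on $\overline{g}$, and for (iii) a counting argument forcing the unique collapsed pairs of $\overline{f}$ and $\overline{g}$ to coincide. The only cosmetic difference is that in (iii) you phrase the count as injectivity of $\overline{a}$ on $\im(\overline{g})$, whereas the paper argues via $\ker(\overline{g})\subseteq\ker(\overline{f})$ together with both kernels having $n-1$ classes; these are equivalent.
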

\begin{proof}
  {\bf (i).}
  Since $\ker(a)\subseteq \ker(f)$ and $f$ is injective on every $P_i\in \P$, it
  follows that $a$ is injective on every $P_i\in \P$. But $a\in \Sigma(X, \P)$
  and so $\overline{a}$ is a permutation. Thus $a$ is a permutation, i.e.
  $a\in S(X, \P)$.

  {\bf (ii).} As in the previous case, $\ker(g)\subseteq \ker(f)$, and since
  $f$ is injective on every part of $\P$, it follows that $g$ is too.
  Since $g\not\in \Sigma(X,\P)$,  $|\im(\overline{g})|\le n-1$.  If
  $|\im(\overline{g})|<n-1$, then $|\im(\overline{ga})|<n-1=|\im(\overline{f})|$, a
  contradiction. So $|\im(\overline{g})|=n-1$, and $g \in \mathcal{A}$.

  {\bf (iii).}  Similar to the previous cases, $\overline{f}=\overline{g}\overline{a}$ implies
   that $\ker(\overline{g})\subseteq \ker(\overline{f})$. But $f, g \in
  \mathcal{A}$, which implies that $|\im(f)|=|\im(g)|=n-1$ and hence
  $\ker(\overline{f})=\ker(\overline{g})$.

\end{proof}

\begin{lemma}\label{lemma-jdm-1}
  Let $U\subseteq T(X,\P)\setminus\Sigma(X,\P)$ be such that $T(X,\P)=\langle
  \Sigma(X,\P), U\rangle$. Then for all distinct $i,j\in\{1,\ldots, n\}$ there exist
  $f\in U\cap \mathcal{A}$ and distinct $k, l\in \{1,\ldots, n\}$ such that
  $(k)\overline{f}=(l)\overline{f}$ and $|P_i|=|P_k|$ and $|P_j|=|P_l|$.
\end{lemma}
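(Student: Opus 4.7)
The plan is to construct an auxiliary element $h\in\mathcal{A}$ whose unique nontrivial kernel class on block-indices is $\{i,j\}$, to factor $h$ through the generating set $\Sigma(X,\P)\cup U$, and then to use Lemma~\ref{lemma-rel-rank} to extract a factor in $U$ that is forced into $\mathcal{A}$ and forced to collapse a pair of blocks of the required sizes.

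Without loss of generality assume $|P_i|\le|P_j|$; the conclusion is invariant under simultaneously swapping $(i,k)$ with $(j,l)$, so this is harmless. I define $h\in T(X,\P)$ to act as the identity on every block $P_s$ with $s\ne i$, and as an arbitrary injection of $P_i$ into $P_j$ on $P_i$. Then $h|_{P_s}$ is injective for every $s$, and $\overline h$ fixes $\{1,\ldots,n\}\setminus\{i\}$ pointwise while sending $i\mapsto j$, so $h\in\mathcal{A}$ with unique nontrivial kernel class $\{i,j\}$ in $\overline h$.

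Next I write $h=u_1u_2\cdots u_m$ with each $u_r\in\Sigma(X,\P)\cup U$. Because $f\mapsto\overline f$ is a homomorphism and $\overline f$ is a permutation precisely when $f\in\Sigma(X,\P)$, the set $\Sigma(X,\P)$ is a subsemigroup; since $h\notin\Sigma(X,\P)$, there is a minimal index $r$ with $u_r\in U$. Setting $a=u_1\cdots u_{r-1}\in\Sigma(X,\P)$ (empty product $=\id$) and $g=u_r\cdots u_m$, we have $h=ag$, so Lemma~\ref{lemma-rel-rank}(i) forces $a\in S(X,\P)$. A routine check shows $\mathcal{A}$ is closed under left-multiplication by elements of $S(X,\P)$, whence $g=a^{-1}h\in\mathcal{A}$. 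Writing $g=u_r\cdot(u_{r+1}\cdots u_m)$ with $u_r\notin\Sigma(X,\P)$, Lemma~\ref{lemma-rel-rank}(ii) yields $u_r\in\mathcal{A}$.

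Finally, Lemma~\ref{lemma-rel-rank}(iii) applied to $g=u_r\cdot(u_{r+1}\cdots u_m)$ (both $g$ and $u_r$ being in $\mathcal{A}$) shows that $\overline{u_r}$ and $\overline g$ share the same unique nontrivial kernel class. Since $\overline g=\overline a^{-1}\,\overline h$, this class is the $\overline a^{-1}$-preimage of $\{i,j\}$, namely $\{(i)\overline a,(j)\overline a\}$. Taking $f=u_r$, $k=(i)\overline a$ and $l=(j)\overline a$ then finishes the proof: $k\ne l$ because $\overline a$ is a permutation, $(k)\overline f=(l)\overline f$ by construction, and because $a\in S(X,\P)$ preserves block sizes we have $|P_k|=|P_i|$ and $|P_l|=|P_j|$. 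The only real subtlety is tracking how the kernel class of $\overline h$ transforms through the permutation $\overline a$; the remaining verifications (that $\Sigma(X,\P)$ is a subsemigroup and that $\mathcal{A}$ is invariant under left-multiplication by $S(X,\P)$) are straightforward.
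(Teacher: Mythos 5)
Your proof is correct and follows essentially the same route as the paper's: pick an element of $\mathcal{A}$ whose induced map on block indices collapses exactly $\{i,j\}$, factor it over $\Sigma(X,\P)\cup U$, and use Lemma~\ref{lemma-rel-rank}(i)--(iii) to force the first $U$-factor into $\mathcal{A}$ with a kernel class of the required block sizes. Your write-up is in fact slightly more explicit than the paper's (concrete construction of $h$, careful isolation of the first $U$-factor via the minimal index $r$), but the underlying argument is the same.
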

\begin{proof}
  Let $i,j\in \{1,\ldots, n\}$ be arbitrary. Then there exists $f\in
  \mathcal{A}$ such that $(i)\overline{f}=(j)\overline{f}$.
  By assumption,  $f\in \langle \Sigma(X,\P),U\rangle$ and so
  $$f=s_1a_1s_2a_2\ldots s_ra_ns_{r+1}\quad\text{for some}\quad s_i\in
  \Sigma(X,\P),\ a_i\in U.$$
  By Lemma \ref{lemma-rel-rank}(i), $s_1\in S(X, \P)$ and so $s_1^{-1}f\in
  \mathcal{A}$. If $k=(i)\overline{s_1}$ and $l=(j)\overline{s_1}$, then
  $(k)\overline{s_1^{-1}f}=(l)\overline{s_1^{-1}f}$. Since $s_1\in S(X,
  \P)$, it follows that $|P_k|=|P_i|$ and $|P_l|=|P_j|$.
  By Lemma \ref{lemma-rel-rank}(iii), $a_1\in \mathcal{A}$ and thus, by Lemma
  \ref{lemma-rel-rank}(ii), $(k)\overline{a_1}=(l)\overline{a_1}$, as required.
\end{proof}

We have everything we need to prove the main result of this
section.

\begin{theorem}\label{th:TXP:SigXP}

  Let $X$ be any finite set, $\P$ a partition of $X$,
  $s$ the number of distinct values $|P_i|$ and $r$ the number of
  distinct values $|P_i|$ such that there are $i \ne j$ with $|P_i| =
  |P_j|$. Then $\rank(T(X,\P):\Sigma(X,\P))=\binom{s}{2}+r$.
\end{theorem}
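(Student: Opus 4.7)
The plan is to establish matching lower and upper bounds on $\rank(T(X,\P):\Sigma(X,\P))$. For the lower bound, call an unordered pair $\{a,b\}$ of block sizes a \emph{collision type} if there exist distinct blocks $P_i\ne P_j$ with $\{|P_i|,|P_j|\}=\{a,b\}$; counting gives $\binom{s}{2}$ types with $a\ne b$ and $r$ types with $a=b$, for a total of $\binom{s}{2}+r$. Suppose $T(X,\P)=\langle\Sigma(X,\P),U\rangle$ for some $U\subseteq T(X,\P)\setminus\Sigma(X,\P)$. For each collision type, fix $i\ne j$ realising it and apply Lemma~\ref{lemma-jdm-1}: it delivers some $f\in U\cap\mathcal{A}$ whose unique collapsed pair of block-indices has size pattern $\{a,b\}$. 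Since $|\im(\overline{f})|=n-1$ for every $f\in\mathcal{A}$, each such $f$ realises exactly one collision type; distinct types therefore force distinct elements of $U\cap\mathcal{A}$, and so $|U|\ge\binom{s}{2}+r$.

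For the upper bound I would build $U$ of size $\binom{s}{2}+r$ by selecting, for each collision type $\{a,b\}$, a single element $u_{a,b}\in\mathcal{A}$ that collapses one block of size $a$ and one block of size $b$ into some block of size $\ge\max(a,b)$ while mapping every other block injectively onto a block of the same size. It then suffices to prove $\langle\Sigma(X,\P),U\rangle=T(X,\P)$, and I would split this into two claims. \textbf{Claim A}: every element of $\mathcal{A}$ belongs to $\langle\Sigma(X,\P),U\rangle$. \textbf{Claim B}: every element of $T(X,\P)\setminus\Sigma(X,\P)$ is a product of elements of $\mathcal{A}$ and $\Sigma(X,\P)$. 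Claim~B is a standard descending-chain argument: if $|\im(\overline{f})|<n-1$ or $f$ fails injectivity on some block, peel off one collision at a time by factoring $f=g\cdot h$ with $g\in\mathcal{A}$ and $h$ of strictly smaller combined collision count, then induct.

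Claim~A is the technical heart. Because $\overline{\Sigma(X,\P)}$ is the full symmetric group on the $n$ block-indices, and $\overline{S(X,\P)}$ acts transitively on index-pairs with any prescribed size pattern, one can always choose $s_1\in S(X,\P)$ and $s_2\in\Sigma(X,\P)$ so that $\overline{s_1 u_{a,b} s_2}=\overline{f}$ for any $f\in\mathcal{A}$ of type $\{a,b\}$. The subtler point is matching the within-block action of $f$: Lemma~\ref{lem:Gaction} implies that the double $S(X,\P)$-orbit of $u_{a,b}$ need not exhaust $\mathcal{A}$ of its type, as the orbits are further distinguished by the target-block size and by how the non-collapsed blocks are routed. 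Here one exploits that $\Sigma(X,\P)$ is strictly larger than $S(X,\P)$ (its members may collapse within a block and may send one block onto a block of a different size) and, where needed, uses longer alternating products $s_1 u_{a_1,b_1} s_2 u_{a_2,b_2}\cdots$ to bridge these orbits. The main obstacle of the proof is therefore to manage this bridging carefully enough to realise every $f\in\mathcal{A}$ while keeping $U$ at its minimum size $\binom{s}{2}+r$.
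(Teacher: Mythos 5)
Your lower bound coincides with the paper's: Lemma~\ref{lemma-jdm-1}, combined with the fact that each $f\in\mathcal{A}$ collapses a unique pair of block indices, gives $\rank(T(X,\P):\Sigma(X,\P))\geq\binom{s}{2}+r$, and your counting of ``collision types'' is correct. The gap is in the upper bound, which is where essentially all of the work lies. Your Claim~A (every element of $\mathcal{A}$ lies in $\langle\Sigma(X,\P),U\rangle$) is not proved: you correctly note that the double coset $S(X,\P)\,u_{a,b}\,S(X,\P)$ does not exhaust the elements of $\mathcal{A}$ of type $\{a,b\}$, but the proposed remedy --- ``longer alternating products to bridge these orbits'' --- is a statement of intent, not an argument, and you give no mechanism for it. The paper never proves anything like Claim~A; it sidesteps the issue by generating only the very special maps $f_{i,j,\phi}$ (an injection of $P_i$ into $P_j$, identity everywhere else), which for a fixed size pattern all lie in a \emph{single} double coset by Lemma~\ref{lem:Gaction}, and then showing that these together with $\Sigma(X,\P)$ already generate $T(X,\P)$.

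Your Claim~B is also not the ``standard descending-chain argument'' you assert. To peel off one collision you need $f=gh$ with $g\in\mathcal{A}$, which forces $\ker g\subseteq\ker f$; but an element of $\mathcal{A}$ is injective on every block, so when it merges two blocks into a block too small to contain their images disjointly it must identify points of one block with points of the other, and these cross-block identifications need not refine $\ker f$. Concretely, let $\P$ consist of two blocks $P_1,P_2$ of size $3$ and let $f$ map $P_1$ to a single point $a\in P_2$ and $P_2$ bijectively onto $P_2$. Every $g\in\mathcal{A}$ maps both blocks bijectively onto one block, so $\ker g$ contains a perfect matching between $P_1$ and $P_2$, whereas $\ker f$ identifies points of $P_1$ only with the one preimage of $a$ in $P_2$; hence no factorisation $f=gh$ with $g\in\mathcal{A}$ exists at all. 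Making the peeling work requires first realising the within-block kernels by an element of $\Sigma(X,\P)$ and then choosing the injective pieces and their extensions compatibly with $f$; organising this coherently is exactly the long construction in the paper's proof (the maps $e$, $g$, $v$, $g'$, $h'$ and the induction on the number of blocks, discarding one block at each step). As it stands, your proposal identifies the correct value and the correct lower bound, but it does not contain a proof of the upper bound.
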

\begin{proof}
By Lemma \ref{lemma-jdm-1}, $\rank(T(X,\P):\Sigma(X,\P))\geq \binom{s}{2}+r$.

For the converse direction,
given positive integers $p \le q$, let
$\mathcal{A}_{p,q}$ be the set of all $f \in \mathcal{A}$
such that $|P_i|=p, |P_j|=q$ where $i$, $j$ are the members of the unique
non-singleton class of $\ker \bar f$. It is clear that the non-empty $\mathcal{A}_{p,q}$ form a partition of
$\mathcal{A} $ with $\binom{s}{2}+r$ parts.

Let $U^{\P}$ be a set of representatives $f_{p,q}$ for the elements of this partition.
We
claim that $U^{\P}$ generates $T(X,\P)$ over $\Sigma(X, \P)$. We will first
show that a particular set of functions can be generated from $U^{\P} \cup
\Sigma(X,\P)$.

For each $i<j$, and $\phi$ an injection from $P_i$ to $P_j$, let $f_{i,j, \phi}
\in T(X, \P)$ be the function that agrees with $\phi$ on $P_i$ and is the
identity everywhere else. By Lemma \ref{lem:Gaction}, $f_{i,j,\phi} \in
S(X,\P)f_{|P_i|,|P_j|}S(X,\P)$, and hence in $\langle \Sigma(X,\P) \cup
U^{\P}\rangle$.

Let $f \in T(X,\P)$.  We will show that $f$ can be generated from
$\Sigma(X,\P)$ and the $f_{i,j, \phi}$ using induction on the number of blocks in a partition $\Q$ of an
arbitrary finite set $X'$. The base case when there is only one block is
trivial, since in this case $\Sigma(X',\Q)=T(X',\Q)$.
Our induction assumption is that $T(X', \Q)$ is generated by $U^{\Q}$ and
$\Sigma(X',\Q)$ when $Y$ is any finite set and $\Q$ is any partition of $Y$
with fewer than $n\in \mathbb{N}$, $n>1$, blocks.

We construct several
functions that are generated by $\Sigma(X,\P)$ and the $f_{i,j,\phi}$ until we
are able to use our inductive hypothesis.

Let $l=(n) \bar f$, and $i_1, \dots, i_k=n$ be the elements of the $\bar
f$-kernel class of $n$. Let $D=P_{i_1} \cup \dots \cup P_{i_k}$.

Choose an injective function $h$ from  $P_l$ to $ P_n$.  This is possible as
$|P_n| \ge |P_l|$. Moreover for all $j$ choose an injective function $h_j$ from
$\im(f|_{P_{i_j}})$  to $P_{i_j}$. Such $h_l$ exist, as the image
$\im(f|_{P_{i_j}})$ is not larger than the domain $P_{i_j}$.

Let $e$ be the function for which $e|_D$ maps $y \in P_{i_j}$ to $((y)f) h_j$
and $e|_{X \setminus D}$ is the identity.  Then $\bar e$ is the identity, and
hence $e \in \Sigma(X, \P)$.

For each $j$, let $\phi_j$ be a function from $P_{i_j}$ to $P_n$ defined in
the following way.  For  $x\in \im(h_j)$, by construction there exists a $y \in
P_{i_j}$ such that $x=((y)f)h_j$. In this case set $(x)\phi_j= ((y)f)h$.

We
have to show that this definition does not depend on the choice of $y$. So let
$((y_1)f)h_j = ((y_2)f)h_j$ for some $y_1, y_2\in P_{i_j}$.
  As $h_j$ is an injection defined on the image of
$f|_{P_{i_j}}$, we have that $(y_1)f=(y_2)f$, and hence $((y_1)f)h=((y_2)f)h$
and so $\phi_j$ is well-defined for every $x \in \im(h_j)$.

Now let
$x_1 \ne x_2$, $x_1, x_2 \in \im(h_j)$, say $((y_1)f)h_j=x_1$
and $((y_2)f)h_j=x_2$. Then $(y_1)f \ne (y_2)f$ and as $h$ is injective
$(x_1) \phi_j = ((y_1)f)h \ne ((y_2)f)h=(x_2) \phi_j$. Hence $\phi_j$ is injective on
$\im(h_j)$.
Now extend $\phi_j$ arbitrary to all of  $P_{i_j}$, subject
to $\phi_j$ being an injection. Such $\phi_j$ exists as $P_n$ is the block of
largest size.  Let $$g = e f_{i_1,n, \phi_1}  \dots f_{i_{k},n,\phi_{k}}.$$

It is straightforward to check to  that $g$ satisfies the following properties:
\begin{enumerate} \item $g|_{X \setminus D}$ is the identity, \item $
    (\{i_1,\dots,i_k\}) \bar g=\{n\}$, and $\bar g$ is the identity otherwise,
  \item  $g|_D$ and $f|_D$ have the same kernel, \item if $x \in D$, then $(x)
g= ((x)f)h$.  \end{enumerate}

Next let $v$ be a function constructed  as follows:  $v$ maps $P_n$ to $P_l$ so
that any $x \in \im(h)$ is mapped to $x h^{-1}$ and is arbitrary otherwise
(recall that $h$ was injective).  For $j=l, \dots, n-1$, $v$ maps $P_j$
injectively into $P_{j+1}$ and is the identity everywhere else. Clearly such $v$
exists and is an element of $\Sigma(X, \rho)$. Let $g'=gv$. We claim that $g'$
has the following properties:

\begin{enumerate}
  \item $g'|_{X \setminus D}$ is injective,
  \item $ (\{i_1,\dots,i_k\}) \bar g'=\{l\}$, and $\bar g'$ maps all other
    values injectively to values different from $l$,
  \item if $x \in D$, then $(x) g'= (x)f$,
  \item $\ker g'  \subseteq \ker f$,
  \item if $(x)g'$ and $(y)g'$ are in the same part of $\P$, then $(x)f$ and $(y)f$
    are in the same part of $\P$.
\end{enumerate}
The first two properties follow from the corresponding results for  $g$. For the
third, let $x \in D$,  then $(x) g'=((x)g)v= (((x)f)h)v=(x)f$, and so $g'$
agrees with $f$ on $D$.  The fourth assertion follows from the first and third,
and the final one from the second and fourth one.

 We will next construct a new function $h'$. If $x \in \im(g')$, say $x = (y)
 g'$, then set $(x)h'=(y)f$. As $\ker g'  \subseteq \ker f$, this function is
 well-defined. By the last property of $g'$, this partial assignment preserves $\P$.
If $x \not\in \im(g')$, $x \in P_i$ and there is a $y \in P_i \cap \im(g')$,
then let $(x)h'=x$ if $(y)h' \in P_i$, or otherwise
be an arbitrary element of the part of $(y)h'$. Once again by
the last property of $g'$, the condition is well defined and the assignment so
far continues to preserve $\P$.  Finally if $i$ is such that $P_i \cap \im(g')$
is empty, then pick a $P_j \in \P$, with $j \ne l$, and let $h'$ map all of
$P_i$ into $P_j$ in an arbitrary way.

The function $h'$ has the following properties:
 \begin{enumerate}
 \item $h' \in T(X, \P)$
 \item  $g'h'=f$
 \item $(\{l\}) \bar h'^{-1}=(\{l\})$.
 \item $h'$ is the identity on $P_l$.
\end{enumerate}
 The first two properties and the fact that $(P_l) h' \subseteq P_l$ follow
 directly from the construction of $h'$. Conversely let $x \in P_i$ with $i \ne
 l$. If $\im (g') \cap P_i$ is empty, then the above construction maps $x$ into
 a part different from $P_l$. If there is an element in $\im(g') \cap P_i$,
 which we may assume w.l.o.g. to be $x$, let $x=(y)g'$. Then $y \not \in
 \{P_{i_1},\dots,P_{i_k}\}$ by the second property of $g'$. But then
 $(x)h'=(y)f$ cannot be in $P_l$ as $(\{l\})\bar f^{-1}=\{i_1,\dots,i_k\}$. So
 $(\{l\}) \bar h'^{-1}=(\{l\})$.

 For the last property, let $x \in P_l \cap \im(g')$, say $(y)g'=x$. By property (2)
 of $g'$, $x \in D$, and hence, by property (3) of $g'$, $(x)h'=(y)f=(y)g'=x$. As $h'$
 maps the elements of $P_l \cap \im(g')$ into $P_l$, it maps
 $P_l \setminus \im(g')$ identical by its definition.

Now let $X' = X\setminus P_l$ and $\Q$ be the partition of $X'$ given by $\P
\setminus \{P_l\}$.  Let $f'=h'|_{X\setminus P_l}$. As $(\{l\}) \bar
h'^{-1}=(\{l\})$, $f' \in T(X', \Q)$. By the induction assumption, $f'=g'_1 \dots
g'_j$, where the $g'_i$ are either from $\Sigma (X', \Q)$, or of the form
$f'_{s,t, \phi}$.  Extend each $g'_i $ to a function $g_i$ in $ T(X, \P)$, by
letting $g_i|_{P_l}$ be the identity.  It is clear that the $g_i$ are
either in $\Sigma(X,\P)$ or are of the form $f_{s,t, \phi}$. Moreover,
as $h'$ is the identity on $P_l$, $h'= g_1 \dots g_j$.
Hence $h'\in  \langle \Sigma(X,\P) \cup U^{\P}\rangle$, and  so
$ f=g'h' \in \langle \Sigma(X,\P) \cup U^{\P}\rangle$, as required.
\end{proof}

\begin{cor}
Let $\P$ be a partition on $X$, such that $\P$ has exactly $m_i \ge 2$ blocks
  of size $n_i \ge 2$, $i=1, \dots, p$, blocks of unique sizes
  $l_1, \dots , l_q$, where $l_i \ge 2$, and $t$ singleton blocks (where
  $p$, $q$, $t$ might be $0$).  Then the rank of $\Sigma(X, \P)$ modulo $S(X,\P)$ is
  $$\binom{p+q}{2}+p+h(p,q,t)$$
where $h(p,q,0)=0$, $h(p,q,1)=p+q$ and $h(p,q,t)=p+q+1$ if $t \ge 2$.
\end{cor}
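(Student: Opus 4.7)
The plan is to derive this directly from Theorem~\ref{th:TXP:SigXP}, which computes $\rank(T(X,\P):\Sigma(X,\P))$ as $\binom{s}{2}+r$, where $s$ is the number of distinct block sizes in $\P$ and $r$ is the number of those distinct sizes that occur at least twice. Note that the corollary is placed at the end of Section~\ref{section-rel-rank-1} and the quantity $\binom{p+q}{2}+p+h(p,q,t)$ is precisely the value attributed to $\rank(T(X,\P):\Sigma(X,\P))$ in the main theorem, so I will read the corollary as a reformulation of Theorem~\ref{th:TXP:SigXP} in the partition-parameters $p,q,t$ (the opening phrase ``the rank of $\Sigma(X,\P)$ modulo $S(X,\P)$'' is, judging both from the formula and from the section title, a typographical slip for ``the rank of $T(X,\P)$ modulo $\Sigma(X,\P)$''). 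The task then reduces to translating $s$ and $r$ into $p,q,t$.

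The key step is a short case analysis on $t$. If $t=0$, the distinct block sizes are $\{n_1,\ldots,n_p,l_1,\ldots,l_q\}$, all pairwise distinct and all at least $2$; so $s=p+q$, and the sizes with multiplicity at least two are exactly $n_1,\ldots,n_p$, giving $r=p$. If $t=1$, the lone singleton block contributes the new size $1$ with multiplicity one, so $s=p+q+1$ while $r=p$. If $t\ge 2$, the size $1$ now has multiplicity at least two, so $s=p+q+1$ and $r=p+1$.

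Substituting into $\binom{s}{2}+r$ and using the identity $\binom{p+q+1}{2}=\binom{p+q}{2}+(p+q)$, the three cases yield $\binom{p+q}{2}+p$, $\binom{p+q}{2}+p+(p+q)$, and $\binom{p+q}{2}+p+(p+q+1)$, respectively. These are precisely $\binom{p+q}{2}+p+h(p,q,t)$ for the stated values $h(p,q,0)=0$, $h(p,q,1)=p+q$, and $h(p,q,t)=p+q+1$ when $t\ge 2$. There is no genuine obstacle: the argument is a routine bookkeeping step once Theorem~\ref{th:TXP:SigXP} is in hand, the only mild subtlety being the distinction between $t=1$ (size $1$ is a unique size) and $t\ge 2$ (size $1$ is a repeated size), which is exactly what the piecewise definition of $h$ reflects.
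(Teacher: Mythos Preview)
Your proof is correct and follows essentially the same route as the paper: a three-case analysis on $t$ translating the parameters $(s,r)$ of Theorem~\ref{th:TXP:SigXP} into $(p,q,t)$, together with the identity $\binom{p+q+1}{2}=\binom{p+q}{2}+(p+q)$. You also correctly flag that the phrase ``rank of $\Sigma(X,\P)$ modulo $S(X,\P)$'' in the statement is a slip for ``rank of $T(X,\P)$ modulo $\Sigma(X,\P)$'', which the paper's own proof tacitly confirms by invoking Theorem~\ref{th:TXP:SigXP}.
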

\begin{proof}
If $t=0$,  we may take $s=p+q$ and $r=p$ in Theorem \ref{th:TXP:SigXP}.

If $t=1$, with $s=p+q+1$ and $r=p$, we get that the rank of $\Sigma(X,\P)$ modulo
$S(X,\P)$ equals
$$\binom{p+q+1}{2}+p=\binom{p+q}{2}+(p+q)+p.$$
Finally, if $t=2$, the result follows analog with $s=p+q+1$, $r=p+1$.
\end{proof}

\section{The rank of $\Sigma(X,\P)$ over $S(X, \P)$}
\label{section-rel-rank-2}

As in the previous sections, suppose $\P=\{P_1,\ldots ,P_n\}$, with
$|P_i|\le|P_{i+1}|$ and let $l_1 < l_2< \dots < l_r$ be the distinct sizes of
blocks in $\P$.

For $i \le r-1$, let $\mathcal{B}_i$ be the set of all  mappings $f \in
\Sigma(X, \P)$ such that there are $P_j$, $P_{j'}$, $P_k$, $P_{k'}$ with
$|P_j|=l_i=|P_{j'}|$, $|P_k|=l_{i+1}=|P_{k'}|$, such that

\begin{enumerate}
  \item $f$ maps $P_j$ injectively to $P_k$
  \item $f$ maps $P_{k'}$ surjectively onto $P_{j'}$
  \item $f$ maps every other block bijectively to a block of the same size.
\end{enumerate}

We do not exclude the possibility that $j=j'$ or $k=k'$. Clearly,
$\mathcal{B}_i $ is non-empty for all $i \le r-1$, and any element of
$\mathcal{B}_i$ has image size $|X|-l_{i+1}+l_i$.

\begin{lemma}\label{mathcalB}
  If $\genset{S(X, \P), U}=\Sigma(X, \P)$ for some $U\subseteq \Sigma(X, \P)$,
  then $\mathcal{B}_i\cap U\neq \emptyset$ for every $i \le r-1$.
\end{lemma}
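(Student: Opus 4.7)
The plan is: assume $\langle S(X,\P), U \rangle = \Sigma(X,\P)$, fix $i \leq r-1$, take any $f \in \mathcal{B}_i$, and factorise $f = s_0 u_1 s_1 \cdots u_m s_m$ with $s_j \in S(X,\P)$ and $u_j \in U$. I would prove that at least one $u_k$ must itself lie in $\mathcal{B}_i$, from which the conclusion follows.

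The argument runs through two invariants of $\Sigma(X,\P)$. First, the defect $d(g) = |X| - |\im g|$: because $\im(gh) \subseteq \im h$ and $|\im(gh)| \leq |\im g|$, one has $d(gh) \geq \max\{d(g), d(h)\}$, the function $d$ vanishes on $S(X,\P)$, and a quick inspection of the definition gives $d(f) = l_{i+1} - l_i$ for every $f \in \mathcal{B}_i$. Second, set $C_i = \{a : |P_a| \leq l_i\}$ and define, for any $\sigma \in S_n$,
$$
\mu_i(\sigma) = |(C_i)\sigma \setminus C_i|.
$$
Splitting $(C_i)\sigma$ into its intersection with $C_i$ and its complement and applying $\tau$ to each piece yields $\mu_i(\sigma\tau) \leq \mu_i(\sigma) + \mu_i(\tau)$. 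Since $\overline{s}$ preserves the size of every block for $s \in S(X,\P)$, it preserves $C_i$, so $\mu_i(\overline{s}) = 0$; and a direct check of the definition of $\mathcal{B}_i$ shows $\mu_i(\overline{f}) = 1$ for $f \in \mathcal{B}_i$, because $\overline{f}$ sends exactly one index of size $l_i$ to an index of size $l_{i+1}$ while mapping the rest of $C_i$ into $C_i$.

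Applied to the factorisation, monotonicity of $d$ forces $d(u_j) \leq l_{i+1} - l_i$ for every $j$, while subadditivity of $\mu_i$ together with $\mu_i(\overline{s_j}) = 0$ produces an index $k$ with $\mu_i(\overline{u_k}) \geq 1$. The heart of the argument is then to show that any $u \in \Sigma(X,\P)$ simultaneously satisfying $d(u) \leq l_{i+1} - l_i$ and $\mu_i(\overline{u}) \geq 1$ must already belong to $\mathcal{B}_i$. I would use the balance identity $\sum_a |P_a| = \sum_a |P_{(a)\overline{u}}|$ (since $\overline{u}$ is a permutation), which says that total ``size increase'' equals total ``size decrease'' across the pairs $(|P_a|, |P_{(a)\overline{u}}|)$. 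The hypothesis $\mu_i(\overline{u}) \geq 1$ yields an index $b$ with $|P_b| > l_i$ and $|P_{(b)\overline{u}}| \leq l_i$, contributing at least $l_{i+1} - l_i$ to the chain
$$
l_{i+1}-l_i \leq \sum_a \max\{0, |P_a| - |P_{(a)\overline{u}}|\} \leq d(u) \leq l_{i+1}-l_i.
$$
Equality therefore holds throughout: this pins the size-change profile of $\overline{u}$ to exactly one pair $(l_i, l_{i+1})$ and one pair $(l_{i+1}, l_i)$ (all other blocks going to blocks of the same size) and forces $u$ to be injective on the $l_i$-block that jumps up, surjective on the $l_{i+1}$-block that drops down, and bijective elsewhere, which is exactly the description of $\mathcal{B}_i$.

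The main obstacle is this final squeeze. Both the defect cap $l_{i+1} - l_i$ and the lower bound $\mu_i \geq 1$ are saturated simultaneously, and one must carefully track the balance identity to rule out (a) any further crossing of the $l_i/l_{i+1}$ boundary, (b) any size change among the non-crossing blocks, and (c) any non-bijective restriction $u|_{P_c}$ on a block where sizes agree; any of these would push $\sum_a \max\{0, |P_a| - |P_{(a)\overline{u}}|\}$ above $l_{i+1} - l_i$ and break the chain, and it is precisely this rigidity that matches the definition of $\mathcal{B}_i$.
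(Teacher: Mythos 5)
Your argument is correct, but it takes a genuinely different route from the paper's. The paper picks a specific $f\in\mathcal{B}_i$ with $\overline f=(j\ k)$, where $j$ and $k$ are the minimal and maximal indices of blocks of sizes $l_i$ and $l_{i+1}$, factorises it over $S(X,\P)\cup U$, and tracks the trajectory of the single index $j$ under the successive induced permutations, locating the first factor at which the block size along that trajectory strictly increases; image-size considerations then force that factor into $\mathcal{B}_i$. You instead work with two global invariants: the defect $d$, which is monotone under composition and caps each factor at $d(u_j)\le l_{i+1}-l_i$, and the boundary-crossing count $\mu_i$, which is subadditive, vanishes on $\overline{S(X,\P)}$, and equals $1$ on $\overline{\mathcal{B}_i}$, so some $u_k\in U$ must have $\mu_i(\overline{u_k})\ge 1$. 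The closing squeeze is sound: since $\overline{u_k}$ is a permutation, $|(C_i)\overline{u_k}\setminus C_i|=|C_i\setminus(C_i)\overline{u_k}|$, so there is a block of size $\ge l_{i+1}$ sent to one of size $\le l_i$; the balance identity gives $\sum_a\max\{0,|P_a|-|P_{(a)\overline{u_k}}|\}=\sum_a\max\{0,|P_{(a)\overline{u_k}}|-|P_a|\}\le\sum_c\bigl(|P_c|-|\im(u_k)\cap P_c|\bigr)=d(u_k)$, and equality throughout pins down exactly one up-crossing, one down-crossing, size preservation elsewhere, and $|\im(u_k)\cap P_c|=\min\{|P_c|,|P_{(c)\overline{u_k}^{-1}}|\}$ for every $c$, which is precisely the injective/surjective/bijective profile defining $\mathcal{B}_i$. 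What your approach buys is robustness: it avoids the paper's somewhat delicate appeal to the minimality of $j$ and to the injectivity of $f|_{P_j}$ when arguing that the tracked index never drops, replacing it with a purely arithmetic rigidity argument; the cost is that you must verify carefully (as you note) that both bounds saturate simultaneously, but that verification is routine. If you write this up, do spell out why $\mu_i(\overline{u_k})\ge 1$ produces a \emph{down}-crossing block $P_b$ (via the permutation identity above) rather than only the up-crossing one that the definition of $\mu_i$ hands you directly.
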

\begin{proof}
  Let $i\in \{1,\ldots, r-1\}$ be arbitrary. Then there is
  an $f \in \mathcal{B}_i$ such that $\overline f =(j \ k)$  with $j <k$ and
  $j$ and $k$ are minimal and maximal among those indices of blocks with sizes
  equal to $|P_j|=l_i$ and $|P_k|=l_{i+1}$, respectively. By assumption, there exist
  $g_{1},\ldots, g_{m} \in S(X, \P)\cup U$ such that $f=g_{1}\ldots g_{m}$ and
  hence $\overline{f}=\overline{g_1}\ldots \overline{g_m}$. Since
  $j\overline{f}=k$, it follows that
  $(j)\overline{g_{1}}\ldots\overline{g_{m}}=k$. It follows, since $f|_{P_j}$ is
  injective, and by the minimality of $j$, that $(j)\overline{g_1},
  (j)\overline{g_1}\overline{g_2},\ldots , (j)\overline{g_1}\overline{g_2}\cdots
  \overline{g_m}\geq j$.

  Let $u$ be the least value for which $|P_j|<|P_{(j)\overline{g_1}\cdots
  \overline{g_u}}|$, and let $t=(j)\overline{g_1}\cdots
  \overline{g_{u-1}}$.
  Then at least
  $|P_{(t)\overline{g_u}}|-|P_{t}|=|P_{(t)\overline{g_u}}|-|P_{j}|$ elements of
  $P_{(t)\overline{g_u}}$ are not in the image of $g_{u}$. But $f$ has image size
  $|X|-|P_k|+|P_j|$ and so $g_u|_{P_{t}}$ is injective,
  $|P_{(t)\overline{g_u}}|=|P_k|$, and $X\setminus P_{(t)\overline{g_u}}$ is
  contained in the image of $g_u$.  It follows that
  $\overline{g_u}$ is a permutation that maps every block other than
  $P_{t}$ onto a block of equal or smaller size.

  Since $\overline{g_u}$ is a permutation, and there is exactly one block of
  $\P$ mapped to a larger block, there is also exactly one block $P_{k'}$
  which is mapped to a smaller block $P_{j'}$.  Due to the restriction on the
  size of the image of $f$, it follows that
  $|P_{(t)\overline{g_{u}}}|=|P_k|$ and $|P_{j'}|=|P_j|$.
  As $g_u$ maps every block other than $P_{t}$ surjectively onto its image
  block, it follows that $g_u \in \mathcal{B}_i\cap U$.
\end{proof}


For each $i \le r$ let $\mathcal{C}_i$ be the set of all $f \in \Sigma (X, \P)$ such that
\begin{enumerate}
  \item $f$ maps each block to one of the same size (potentially itself);
  \item there is one block of size $l_i$ whose image under $f$ has size $l_i-1$;
  \item $f$ maps all other blocks injectively.
\end{enumerate}
Clearly, any such $f$ has images size $|X|-1$, and $\mathcal{C}_i$ is non-empty
except when $i=1$ and $l_1 = 1$.

\begin{lemma} \label{mathcalC}
  If $\genset{S(X, \P), U}=\Sigma(X, \P)$ for some $U\subseteq \Sigma(X, \P)$,
  and $i\in \{1, \ldots, r\}$ is such that either $i=1$ and $l_1\ne 1$ or $i\ge
  2$ and $l_i-l_{i-1}\ge 2$, then $\mathcal{C}_i\cap U\neq \emptyset$.
\end{lemma}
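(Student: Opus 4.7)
The plan is to mimic the proof of Lemma~\ref{mathcalB} by pinpointing a single factor in any expression for a suitable $f \in \mathcal{C}_i$ that itself must lie in $\mathcal{C}_i$.  First note that the hypothesis forces $l_i \ge 2$ (either $l_1 \ge 2$, or $l_i \ge l_{i-1} + 2 \ge 3$), so $\mathcal{C}_i$ is non-empty.  Pick any $f \in \mathcal{C}_i$ and factor $f = g_1 \cdots g_m$ with each $g_v \in S(X, \P) \cup U$.  Since every element of $\Sigma(X, \P)$ has image meeting every block, each $\overline{g_v}$ is a permutation of $\{1, \ldots, n\}$.

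Because $|\im(g_v)| \ge |\im(g_1 \cdots g_m)| = |X| - 1$, each $g_v$ either is a bijection of $X$ (and so lies in $S(X, \P)$) or identifies exactly one pair of elements.  Since $|\im(g_1 \cdots g_v)|$ is non-increasing, starts at $|X|$, and ends at $|X| - 1$, there is a unique index $u$ at which it drops.  Consequently $g_1, \ldots, g_{u-1}$ are permutations, $\pi := g_1 \cdots g_{u-1} \in S(X, \P)$, $g_u \in U$, and $g_u$ identifies exactly one pair.

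Write $f = \pi g_u h$ with $h = g_{u+1} \cdots g_m$.  Since no further image drop occurs, $h$ is injective on $\im(\pi g_u)$, so the unique pair identified by $f$ is mapped by $\pi$ to the unique pair identified by $g_u$.  Because $\pi$ preserves block sizes, $g_u$ identifies a pair inside some block $P_{j_u}$ of size $l_i$.  Since $\overline{g_u}$ is a permutation, the sets $g_u(P_k) \subseteq P_{(k)\overline{g_u}}$ are disjoint, so $|\im(g_u)| = \sum_k |g_u(P_k)|$; combining this with $\sum_k |P_k| = |X|$ and $|\im(g_u)| = |X| - 1$ gives $\sum_k \bigl( |P_k| - |g_u(P_k)| \bigr) = 1$, forcing $g_u$ to be injective on every block other than $P_{j_u}$.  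Setting $\delta_k = |P_{(k)\overline{g_u}}| - |P_k|$, we then have $\delta_k \ge 0$ for $k \ne j_u$, $\delta_{j_u} \ge -1$, and $\sum_k \delta_k = 0$; hence either (a) every $\delta_k = 0$, or (b) $\delta_{j_u} = -1$ and exactly one other $\delta_{k'} = +1$.

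The hypothesis rules out (b): if $i = 1$ then $l_1 - 1$ is below every block size, while if $i \ge 2$ then $l_{i-1} < l_i - 1 < l_i$, so in either case no block of $\P$ has size $l_i - 1$, and $\overline{g_u}$ cannot send $P_{j_u}$ to such a block.  Therefore (a) holds, $\overline{g_u}$ preserves all block sizes, and combined with the injectivity on every block other than $P_{j_u}$ and the one-pair collapse in the size-$l_i$ block $P_{j_u}$ this places $g_u$ in $\mathcal{C}_i \cap U$.  The main obstacle is establishing the size-shift dichotomy (a) vs.\ (b) cleanly; once that is in hand, the gap hypothesis kills the bad case immediately.
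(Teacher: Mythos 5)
Your proof is correct, and it follows the same overall strategy as the paper (isolate one factor of a factorisation of some $f\in\mathcal{C}_i$ and show that this factor must lie in $\mathcal{C}_i\cap U$), but the way you locate and analyse that factor is genuinely different. The paper takes $z$ minimal such that the image of the partial product $h_1\cdots h_z$ fails to contain some block of size $l_i$, lets $P_j$ be the block that $h_z$ sends into that block, and then uses the image-size budget together with the gap hypothesis to force $|P_j|=l_i$; there the hypothesis is used to exclude a \emph{source} block of size at most $l_i-2$ mapping onto all but one point of a size-$l_i$ block. You instead locate the unique index $u$ where the image size of the partial products drops, observe that the prefix is then a unit and the factor $g_u$ lies in $U$ with exactly one collapsed pair, and transport the kernel class of $f$ through the unit prefix (much in the spirit of Lemma~\ref{lemma-jdm-1}) to see that the collapse of $g_u$ happens inside a block of size $l_i$; your $\delta_k$-counting then reduces everything to two cases, and the gap hypothesis is used to exclude a \emph{target} block of size $l_i-1$. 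A side benefit of your route is that the non-injectivity of the critical factor, hence its membership in $U$ rather than $S(X,\P)$, is explicit from the outset, whereas in the paper this point rests implicitly on the minimality of $z$; your block-size bookkeeping via $\sum_k\delta_k=0$ also replaces the paper's separate case analysis $|P_j|<l_i$ versus $|P_j|>l_i$ with a single clean argument. Note only that your critical index $u$ need not coincide with the paper's $z$ (the lost point may sit in a block of a different size at step $u$ and only later matter for a size-$l_i$ block), so the two proofs may certify different elements of $\mathcal{C}_i\cap U$; this is harmless, but it underlines that the arguments are not merely rewordings of each other.
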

\begin{proof}
  Let $f\in \mathcal{C}_i$ be arbitrary.  Then $f=h_1 h_2 \dots h_m$ for some
  $h_1, h_2, \ldots, h_m \in S(X, \P)\cup U$. As mentioned above, the image of
  $f$ has size $|X|-1$.

  Let $z$ be the smallest index for which there is a block $P_k$ of size $l_i$
  that is not contained in the image of $h_1 \cdots h_z$.  Clearly, the the
  image of $h_z$ must contain $l_i-1$ elements of $P_k$.  Since $h_z\in
  \Sigma(X, \P)$, it follows that $\overline{h_z}$ is a permutation.  We set
  $j=(k)\overline h_z^{-1}$.

  We will show that $|P_j|=l_i$. By way of contradiction, assume that $|P_j|
  < l_i$. This is not possible for $i=1$, and if $i \ge 2$ then our condition on
  $i$ implies that $|P_j| < l_i-2$. However in the latter case, there would be at
  least two elements of $P_k$ that were not in the image of $h_z$, contradicting
  the assumption that $f$ has image size $|X|-1$. So $|P_j| \ge l_j$.

  If $|P_j| > l_i$ then (as $\overline h_z$ is a permutation on a finite set)
  there must be one other index $j'$ such that $j' \overline h_z =k'$ with
  $|P_{j'}|<|P_{k'}|$ and $k \ne k'$.  However, in this case  $P_{k'}$  and
  $P_k$ would not be contained in the image of $h_z$, once again contradicting
  the assumption on the image size of $f$.

  We have shown that $|P_j|=l_i$. Note that $h_z$ must map $X\setminus P_j$
  bijectively to $X\setminus P_k$, once again by considering the size of the
  image of $f$.  It follows that $h_z \in \mathcal{C}_i\cap U$.
\end{proof}

We define
$$\mathcal{B}=\bigcup_{i=1}^{r-1} \mathcal{B}_i\qquad\text{and}\qquad
\mathcal{C}=\bigcup_{i=1}^{r}\mathcal{C}_i.$$

Let $f\in S_n$. Then $g \in \Sigma(X,\P)$ is said to be a \emph{companion of
$f$} if
\begin{enumerate}
  \item $\overline{g}=f$;
  \item if $|P_i|\leq |P_{(i)f}|$, then $g|_{P_i}:P_i\to P_{(i)f}$ is
    injective;
  \item if $|P_i|\geq |P_{(i)f}|$, then
    $g|_{P_i}:P_i\to P_{(i)f}$ is surjective.
\end{enumerate}

\begin{lemma}\label{lemma-B-C}
  If $f\in\Sigma(X,\P)$ and there is a companion for
  $\overline{f}$ in $\langle S(X,\P), \mathcal{B}\rangle$, then $f \in
  \langle S(X,\P), \mathcal{B}, \mathcal{C}\rangle$.
\end{lemma}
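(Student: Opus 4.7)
The plan is to decompose $f$ as a triple product $h_1\cdot g\cdot h_2$, where $g$ is the given companion of $\overline f$ in $\langle S(X,\P),\mathcal B\rangle$ and $h_1,h_2$ are transformations that fix every block setwise (i.e.\ $\overline{h_1}=\overline{h_2}=\id$) that I will show lie in $\langle S(X,\P),\mathcal C\rangle$. Write $\sigma=\overline f=\overline g$, which is a permutation because $f\in\Sigma(X,\P)$. Call a block $P_i$ \emph{stretching} if $|P_i|\le |P_{\sigma(i)}|$ (so $g|_{P_i}$ is injective) and \emph{strictly squishing} if $|P_i|>|P_{\sigma(i)}|$ (so $g|_{P_i}$ is surjective but not injective). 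On each stretching $P_i$ set $h_1|_{P_i}=\id$ and define $h_2$ on $g(P_i)\subseteq P_{\sigma(i)}$ by $h_2(g(x))=f(x)$, which is well defined by injectivity of $g|_{P_i}$, and extend $h_2$ arbitrarily within $P_{\sigma(i)}$ on $P_{\sigma(i)}\setminus g(P_i)$. On each strictly squishing $P_i$ define $h_1|_{P_i}$ by choosing, for each $x\in P_i$, some preimage $h_1(x)\in g^{-1}(f(x))\cap P_i$ (nonempty by surjectivity of $g|_{P_i}$), and set $h_2$ to be the identity on $P_{\sigma(i)}$. A direct pointwise check then confirms $f=h_1\cdot g\cdot h_2$; the two ``correction regions'' do not interfere because $\sigma$ is a bijection, so no block $P_j$ is simultaneously the $\sigma$-image of both a stretching and a strictly squishing block.

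It remains to establish the sub-lemma: any $h\in\Sigma(X,\P)$ with $\overline h=\id$ lies in $\langle S(X,\P),\mathcal C\rangle$. Such an $h$ acts independently on each block as a self-map, so the set of such $h$ equals $\prod_{P\in\P}T(P)$. By the classical generation identity $T_n=\langle S_n,e\rangle$ for any rank-$(n-1)$ element $e$, it suffices to exhibit, for each block $P$ with $|P|=\ell\ge 2$, an element $e_P^\star\in\langle S(X,\P),\mathcal C\rangle$ acting as a prescribed rank-$(\ell-1)$ map on $P$ and as the identity on every other block. For this, pick any rank-$(\ell-1)$ self-map $\varepsilon$ of $P$ and define $e_P^\star$ to equal $\varepsilon$ on $P$ and the identity elsewhere: this is manifestly an element of $\mathcal C_i$, where $l_i=\ell$. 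Combined with the subgroup $\prod_P S_P\subseteq S(X,\P)$ of blockwise permutations, applying the classical generation fact blockwise yields $\prod_P T(P)\subseteq\langle S(X,\P),\mathcal C\rangle$, and hence in particular $h_1,h_2\in\langle S(X,\P),\mathcal C\rangle$.

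The main obstacle is the verification of the decomposition $f=h_1 g h_2$, in particular the disjointness argument that prevents $h_1$ and $h_2$ from interfering: one must check that, on a strictly squishing block $P_i$, the value $h_2(g(h_1(x)))=h_2(f(x))$ equals $f(x)$, which reduces to showing that $P_{\sigma(i)}$ is not in the $\sigma$-image of any stretching block---immediate from $\sigma$ being a permutation. Apart from this bookkeeping, the remainder of the argument is a straightforward application of the classical generation result for $T_n$ applied blockwise; singleton blocks cause no trouble, since $T(P)$ is trivial on them and no element of $\mathcal C$ at size $1$ is required.
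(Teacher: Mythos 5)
Your proof is correct and follows essentially the same strategy as the paper's: both reduce to the observation that every transformation acting blockwise (i.e.\ with $\overline{h}=\id$) lies in $\langle S(X,\P),\mathcal{C}\rangle$ via the classical fact that $T_Y$ is generated by $S_Y$ together with any map of rank $|Y|-1$, and then sandwich the given companion $g$ between such corrections. The only difference is cosmetic: the paper writes $f=ehg$ with an idempotent $e$ realising $\ker(f)$ and a unit $h$, whereas you distribute the corrections on both sides as $h_1gh_2$ according to whether each block is stretched or collapsed by $\overline{f}$.
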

\begin{proof}
  If $k\in \{1,\ldots, n\}$ is such that $|P_k|=l_i>1$ for some $i$, then there
  exists $t_k\in \mathcal{C}_i$ which is the identity outside $P_k$.  It is
  well-known that for any finite set $Y$ with at least two elements, every
  function on $Y$ is a product of permutations and a fixed function with image
  size $|Y|-1$. Therefore $t_k$ and $S(X, \P)$ generate every element of $T(X,
  \P)$ which maps $P_k$ to $P_k$ and fixes $X\setminus P_k$.  It follows that
  every $f\in \Sigma(X, \P)$ such that $\overline{f}$ is the identity belongs to
  $\langle S(X,\P), \mathcal{C} \rangle$.

  Let $f \in \Sigma(X,\P)$. Then by assumption there exists $g \in \langle
  S(X,\P),\mathcal{B}\rangle$ such that $g$ is a companion for $\overline{f}$.
  From the preceding paragraph, there is an idempotent $e\in \langle S(X, \P),
  \mathcal{C}\rangle$ such that $\ker(e)=\ker(f)$. It follows that there exists
  $h\in S(X, \P)$ such that $f=ehg\in \langle S(X, \P), \mathcal{B},
  \mathcal{C}\rangle$.
\end{proof}

\begin{lemma}\label{lemma-B}
  If $f \in \Sigma(X,\P)$, then there exists a companion for $\overline{f}$ in
  $\langle S(X,\P), \mathcal{B} \rangle$.
\end{lemma}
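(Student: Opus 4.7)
The plan is to construct a companion for $\overline{f}$ explicitly, working cycle by cycle. Since $f\in\Sigma(X,\P)$, the induced map $\overline{f}$ is a permutation of $\{1,\ldots,n\}$, and its disjoint cycles act on disjoint families of blocks. A companion for a single cycle acts as the identity on the blocks outside that cycle, so companions for different cycles commute pairwise and their product is a companion for $\overline{f}$. It therefore suffices to construct, for a single cycle $C=(i_1,i_2,\ldots,i_s)$ that fixes every other index, a companion in $\langle S(X,\P),\mathcal{B}\rangle$; the case $s=1$ is handled by the identity in $S(X,\P)$.

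For such a cycle with $s\ge 2$, I would first rotate the labels so that $i_1$ has the \emph{largest} block size among $i_1,\ldots,i_s$, and then decompose $C$ through this ``hub'' as $C=(i_1,i_2)(i_1,i_3)\cdots(i_1,i_s)$. Taking a companion $g_k\in\langle S(X,\P),\mathcal{B}\rangle$ for each transposition $(i_1,i_k)$ (provided by the sub-lemma below), I claim the product $g=g_2g_3\cdots g_s$ is a companion for $C$ once the internal injection/surjection data inside each $g_k$ is chosen compatibly. Tracing the action of $g$ on each $P_{i_k}$ reduces the verification to showing that for $2\le k\le s-1$ the composite $g_{k+1}|_{P_{i_1}}\circ g_k|_{P_{i_k}}$ --- a surjection $P_{i_1}\twoheadrightarrow P_{i_{k+1}}$ preceded by an injection $P_{i_k}\hookrightarrow P_{i_1}$ --- can be made injective when $|P_{i_k}|\le|P_{i_{k+1}}|$ and surjective when $|P_{i_k}|\ge|P_{i_{k+1}}|$. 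Because $P_{i_1}$ is the largest block of $C$, it is big enough to host the image of the injection and still admit a surjection of the required shape; an explicit choice of the image of $g_k|_{P_{i_k}}$ together with the fibre partition of $g_{k+1}|_{P_{i_1}}$ achieves this, and these choices can be made independently for each $k$.

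The remaining sub-lemma is that every transposition $(i,j)$ admits a companion in $\langle S(X,\P),\mathcal{B}\rangle$. When $|P_i|=|P_j|$ the companion lies in $S(X,\P)$, and when $|P_i|$ and $|P_j|$ are adjacent in the size list $l_1<\cdots<l_r$ the companion lies in $S(X,\P)\cdot\mathcal{B}\cdot S(X,\P)$. When $|P_i|=l_a$ and $|P_j|=l_b$ with $b-a\ge 2$, I pick an intermediate index $m$ with $|P_m|=l_{a+1}$, write $(i,j)=(m,j)(i,m)(m,j)$, and induct on $b-a$; the injection on $P_m$ produced by the first factor and the fibres of the surjection $P_j\to P_m$ produced by the last factor must be chosen so that $P_m$ is mapped bijectively to itself. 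The main obstacle throughout is that companions do not compose to companions a priori --- an injection followed by a surjection can destroy image size --- so the bulk of the argument will be the explicit bookkeeping showing that the free choices inside each factor can always be made compatibly to force the correct block-by-block injective/surjective behaviour. Routing through a maximal-size hub (for cycles) and through a size-$l_{a+1}$ intermediate (for transpositions) is what makes this bookkeeping tractable.
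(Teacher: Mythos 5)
Your plan is sound and, in outline, reaches the lemma by a route of the same general kind as the paper but with genuinely different decompositions. The paper orders the blocks so that $|P_1|\le\cdots\le|P_n|$, realises each adjacent transposition $(k\ k{+}1)$ directly as a single element of $\mathcal{B}$ (or of $S(X,\P)$ when the two sizes agree), obtains an arbitrary transposition from a palindromic product of these, and then inducts on cycle length, splitting a cycle at its \emph{minimal} index; crucially, at each inductive step it interposes an explicit unit $g\in S(X,\P)$, supported on one block, carrying the image of the companion built so far onto a section of the kernel of the next factor. You instead split $\overline{f}$ into disjoint cycles, decompose each cycle through a hub of \emph{maximal} block size into transpositions $(i_1,i_k)$, and resolve a transposition between non-adjacent sizes recursively through a block of the next size up. Your bookkeeping is essentially right: the constraint attached to index $k$ involves only the injection part of $g_k$ and the surjection part of $g_{k+1}$, which are restrictions to disjoint domains, and the size estimates ($|P_{i_1}|$ maximal in the cycle, $|P_m|=l_{a+1}$ in the transposition recursion) do make the required transversal or covering choices possible. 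The paper's route buys that every elementary factor is literally in $\mathcal{B}\cup S(X,\P)$ and the correction is a single visible unit; your route buys a shallower combinatorial picture of each cycle (every block interacts only with the hub) at the price of a second recursion for long-range transpositions.

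Two points need tightening. First, your sub-lemma and your induction only assert the \emph{existence} of a companion of each transposition, so you are not automatically free to prescribe its injection image or the fibres of its surjection; the clean repair is exactly the paper's device: insert units of $S(X,\P)$ supported on the relevant block between consecutive factors (for instance, replace $g_{k+1}$ by $u\,g_{k+1}$, where $u$ permutes $P_{i_1}$, is the identity elsewhere, and carries $\im(g_k|_{P_{i_k}})$ onto a transversal, respectively a covering set, of the fibres of $g_{k+1}|_{P_{i_1}}$). Such insertions stay inside $\langle S(X,\P),\mathcal{B}\rangle$ and preserve the companion property of each factor, so this is a routine patch rather than a flaw in the strategy. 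Second, companions of disjoint cycles need not commute, nor act as the identity outside their cycle: on a block fixed by $\overline{f}$ a companion is only required to be a bijection of that block. This is harmless, because a product of companions of disjoint cycles, taken in any order, is a companion of the product permutation, but the first step of your argument should be stated in that form rather than via commutativity.
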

\begin{proof}
  Since every permutation is a product of disjoint cycles,  there exists a
  companion for $f \in \Sigma(X,\P)$ in $\genset{S(X, \P), \mathcal{B}}$ if and
  only if there is a companion in $\langle S(X,\P), \mathcal{B} \rangle$ for
  every cycle in $S_n$.

  For any $k \le n-1$ let $f_{(k\ k+1)}$ be such that $\overline{f_{(k\
  k+1)}}=(k \ k+1)$, $f_{(k\ k+1)}|_{P_{k+1}}$ maps onto $P_k$, the image of
  $f_{(k\ k+1)}|_{P_k}$ is a section for the kernel of $f_{(k\
  k+1)}|_{P_{k+1}}$, and $f_{(k\ k+1)}$ is the identity outside of $P_k \cup
  P_{k+1}$.  Since $|P_{k+1}f|=|P_k|$, it follows that $f$ is injective on $P_k$
  and so $f_{(k \ k+1)}$ is a companion for $(k \ k+1)$.  Moreover, $f_{(k\
  k+1)}$ belongs to $\mathcal{B}$ when $|P_k|<|P_{k+1}|$ and it belongs to
  $S(X, \P)$ when $|P_k|=|P_{k+1}|$.

  Suppose that $i<j$. Then it is straightforward to check that
  $$f_{(i \ j)}=f_{(j-1 \ j)} f_{(j-2 \ j-1)} \cdots f_{(i+1 \ i+2)} f_{(i \
  i+1)} f_{(i+1 \ i+2)} f_{(i+2 \ i+3)} \cdots f_{(j-2 \ j-1)} f_{(j-1 \ j)}$$
  is a companion for $(i\ j)$.

  We proceed by induction on the length $k$ of a cycle.  Suppose that for
  some $k$ with $2\leq k<n$, there exists a companion in $\langle S(X,\P),
  \mathcal{B} \rangle$ for every cycle of length at most $k$.
  Let $h=(x_1\ldots x_{k+1})$ and let $x_j=\min\{x_1,\ldots , x_{k+1}\}$. Then
  $$h=(x_{j+1}x_{j+2}\ldots x_{k+1}x_1\ldots x_{j-1})(x_j\ x_{j+1}).$$
  By induction, there is a companion $h_1\in \langle S(X,\P), \mathcal{B}
  \rangle$ for $(x_{j+1}x_{j+2}\ldots
  x_{k+1}x_1\ldots x_{j-1})$.  It follows that $|\im(h_1|_{P_{x_{j-1}}})| =
  \min\{|P_{x_{j-1}}|,|P_{x_{j+1}}|\} \ge |P_{x_{j}}|$ by the minimality of
  $x_j$.

  Let $g \in S(X,\P)$ be such that $g$ maps a subset of
  $\im(h_1|_{P_{x_{j-1}}})\subseteq P_{x_{j+1}}$ onto a section of the
  kernel of $f_{(x_j \ x_{j+1})}|_{P_{x_{j+1}}}$, and is the identity outside of
  $P_{x_{j+1}}$. Since $f_{(x_j \ x_{j+1})}|_{P_{x_{j+1}}}$ has $|P_{x_j}|$ kernel
  classes such $g$ exists due to our estimate above.
  It follows that $h_1g f_{({x_j}\ {x_{j+1}})}$ is a companion
  for $h$.
\end{proof}

The two previous results imply the following corollary.

\begin{cor}\label{cor:SigXPgen}
  $\Sigma(X,\P)=\langle S(X, \P), \mathcal{B}, \mathcal{C} \rangle$.
\end{cor}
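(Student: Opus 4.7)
The corollary is essentially immediate from the two lemmas that precede it, so the ``proof'' is really just an orchestration rather than any new argument. My plan is as follows.

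First, I would dispatch the easy inclusion. Since $\mathcal{B}\subseteq\Sigma(X,\P)$ by construction (every $f\in\mathcal{B}_i$ is explicitly defined as an element of $\Sigma(X,\P)$), and likewise $\mathcal{C}\subseteq\Sigma(X,\P)$, and $S(X,\P)\subseteq\Sigma(X,\P)$, and finally $\Sigma(X,\P)$ is closed under composition (the product of two transformations whose images meet every block still meets every block, since the second factor, being in $\Sigma(X,\P)$, maps every block into some block and the image of the first already hits every block), we conclude that $\langle S(X,\P),\mathcal{B},\mathcal{C}\rangle\subseteq\Sigma(X,\P)$.

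For the non-trivial inclusion, I would simply chain Lemma \ref{lemma-B} into Lemma \ref{lemma-B-C}. Given an arbitrary $f\in\Sigma(X,\P)$, Lemma \ref{lemma-B} produces a companion for $\overline{f}$ inside $\langle S(X,\P),\mathcal{B}\rangle$. The hypothesis of Lemma \ref{lemma-B-C} is thereby satisfied, and its conclusion gives $f\in\langle S(X,\P),\mathcal{B},\mathcal{C}\rangle$. Since $f$ was arbitrary, the reverse inclusion $\Sigma(X,\P)\subseteq\langle S(X,\P),\mathcal{B},\mathcal{C}\rangle$ follows.

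There is no real obstacle at this stage: all the substantive work has been absorbed into Lemmas \ref{lemma-B-C} and \ref{lemma-B}, where the companions are built out of transpositions and where arbitrary $\Sigma$-elements are decomposed as $e\cdot h\cdot g$ with $e$ an idempotent in $\langle S(X,\P),\mathcal{C}\rangle$ sharing the kernel of $f$, $h\in S(X,\P)$, and $g$ a companion for $\overline{f}$. The only thing worth double-checking in passing is that $\Sigma(X,\P)$ really is closed under composition (needed to make the inclusion $\langle S(X,\P),\mathcal{B},\mathcal{C}\rangle\subseteq\Sigma(X,\P)$ meaningful), but this is immediate from the definition of $\Sigma(X,\P)$ as the set of transformations in $T(X,\P)$ whose image meets every block.
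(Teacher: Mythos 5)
Your proposal is correct and matches the paper exactly: the paper offers no separate argument, stating only that Lemmas \ref{lemma-B-C} and \ref{lemma-B} imply the corollary, which is precisely the chaining you describe (together with the routine observation that $S(X,\P)$, $\mathcal{B}$, $\mathcal{C}$ lie in $\Sigma(X,\P)$ and that $\Sigma(X,\P)$ is closed under composition, the latter being equivalent to the paper's earlier remark that the complement of $\Sigma(X,\P)$ is an ideal).
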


\begin{theorem}\label{th:SigXP:SXP}
  Let $U$ be a set that contains one representative from each $\mathcal{B}_i$,
  for $i \le r-1$, and one representative from each $\mathcal{C}_i$, for all $i$
  that satisfy the condition in the statement of Lemma \ref{mathcalC} (i.e.
  either $i=1$ and $l_1 \ge 2$ or $i\ge 2$ and $l_i-l_{i-1}\ge 2$). Then
  $\Sigma(X, \P)=\langle S(X, \P), U \rangle$.
\end{theorem}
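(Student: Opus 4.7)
The plan is to show that every ``building block'' element of $\mathcal{B} \cup \mathcal{C}$ used in the proof of Corollary~\ref{cor:SigXPgen}---namely the $f_{(k,k+1)}$'s appearing in Lemma~\ref{lemma-B} and the identity-outside-$P_k$ members $t_k$ of $\mathcal{C}_i$ appearing in Lemma~\ref{lemma-B-C}---already lies in $\langle S(X,\P), U \rangle$. Once this is done, the proof of Corollary~\ref{cor:SigXPgen} goes through verbatim and delivers $\Sigma(X,\P)=\langle S(X,\P),U\rangle$.

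For the $\mathcal{B}$-part: the kernel multiset of the surjective restriction $P_{k+1}\to P_k$ in the definition of $f_{(k,k+1)}$ is a free parameter of the construction, so I would choose it to coincide with that of the representative $b_i\in U\cap\mathcal{B}_i$ (where $|P_k|=l_i$, $|P_{k+1}|=l_{i+1}$). Then $J_{p,q,f_{(k,k+1)}}=J_{p,q,b_i}$ for every pair $(p,q)$ of block sizes, and Lemma~\ref{lem:Gaction} places $f_{(k,k+1)}$ in the double coset $S(X,\P)\, b_i\, S(X,\P)\subseteq\langle S(X,\P),U\rangle$. Running the proof of Lemma~\ref{lemma-B} with these specific companions produces, for every $f\in\Sigma(X,\P)$, a companion for $\overline{f}$ inside $\langle S(X,\P),U\rangle$.

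For the $\mathcal{C}$-part: for each block $P_k$ with $|P_k|=l_i\ge 2$ I need an element $t_k\in\mathcal{C}_i$ that is the identity outside $P_k$. If $i$ satisfies the condition of Lemma~\ref{mathcalC}, then $U$ contains a representative $c_i\in\mathcal{C}_i$, and every element of $\mathcal{C}_i$ has the same $J$-data (one active block with kernel $\{2,1,\dots,1\}$, all other blocks bijective onto a same-size block), so Lemma~\ref{lem:Gaction} gives $t_k\in S(X,\P)\, c_i\, S(X,\P)\subseteq\langle S(X,\P),U\rangle$. The delicate case is $i\ge 2$ with $l_i-l_{i-1}=1$, for which no representative of $\mathcal{C}_i$ sits in $U$; here I would build $t_k$ from $b_{i-1}\in U\cap\mathcal{B}_{i-1}$. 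The hypothesis $l_i-l_{i-1}=1$ forces the surjective kernel of every element of $\mathcal{B}_{i-1}$ to be $\{2,1,\dots,1\}$, so by Lemma~\ref{lem:Gaction} the set $\mathcal{B}_{i-1}$ is a single $S(X,\P)$-double coset. Inside this coset I select a ``canonical'' $b$ for which $\overline{b}=(j,k)$ swaps a size-$l_{i-1}$ block $P_j$ with the chosen size-$l_i$ block $P_k$; $b|_{P_j}\to P_k$ is injective with image $P_k\setminus\{x_0\}$; $b|_{P_k}\to P_j$ is surjective with kernel pair $\{x_1,x_2\}$; $b$ is the identity off $P_j\cup P_k$; and crucially $x_0\in\{x_1,x_2\}$. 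This last condition can be arranged by pre- and post-multiplying by $S(X,\P)$-elements that permute $P_k$ internally, since such permutations move the ``missed'' element and the kernel pair independently. A direct block-by-block calculation then shows that $b^2$ is a permutation of $P_j$, has image of size $l_i-1$ on $P_k$, and is the identity off $P_j\cup P_k$; post-multiplying by the $S(X,\P)$-element that inverts $b^2|_{P_j}$ and is the identity elsewhere produces $t_k\in\mathcal{C}_i\cap\langle S(X,\P),U\rangle$.

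With every $f_{(k,k+1)}$ and every $t_k$ accounted for, the remainder of the argument just stitches the pieces together as in Corollary~\ref{cor:SigXPgen}: given $f\in\Sigma(X,\P)$, the companion $g$ from the $\mathcal{B}$-step together with the idempotent $e$ assembled from the $t_k$'s as in Lemma~\ref{lemma-B-C} produce a factorisation $f=ehg$ with every factor in $\langle S(X,\P),U\rangle$. The main obstacle is the final case in the $\mathcal{C}$-step: verifying that one can simultaneously force $x_0\in\{x_1,x_2\}$ inside the unique $S(X,\P)$-double coset of $\mathcal{B}_{i-1}$, and then checking that $b^2$ followed by a single element of $S(X,\P)$ really does give the required $t_k\in\mathcal{C}_i$. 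All other steps are routine consequences of Lemma~\ref{lem:Gaction} combined with the lemmas already established in this section.
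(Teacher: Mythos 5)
Your proposal is correct, and it reaches the conclusion by a route that overlaps with the paper's on the $\mathcal{C}$-side but differs on the $\mathcal{B}$-side. Both arguments lean on the double-coset criterion of Lemma \ref{lem:Gaction}, and both manufacture the missing $\mathcal{C}_i$-type elements in the case $l_i-l_{i-1}=1$ by squaring an element in the coset of the $\mathcal{B}_{i-1}$-representative (your $b^2$ is the paper's $f_{xy}^2$); in fact you are more careful than the paper here, since you spell out the coordination condition $x_0\in\{x_1,x_2\}$ between the missed point of the injective part and the kernel pair of the surjective part, which the paper dismisses with ``can easily be checked'' although the squaring only works for such a choice. The difference is in how the $\mathcal{B}$-elements are handled: the paper proves the stronger statements $\mathcal{C}\subseteq\langle S(X,\P),U\rangle$ and $\mathcal{B}\subseteq\langle S(X,\P),U\rangle$ and then quotes Corollary \ref{cor:SigXPgen} as a black box; since a single $\mathcal{B}_i$ splits into several double cosets when $l_{i+1}-l_i\ge 2$, this forces the paper to first build, inside each block $P_z$, a copy $Q_z\cong T_{P_z}$ of the full transformation monoid (from the generated $\mathcal{C}$-elements and the units) so that arbitrary kernel types can be realised and matched via Lemma \ref{lem:Gaction}. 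You avoid this entirely by re-opening the proofs of Lemmas \ref{lemma-B} and \ref{lemma-B-C}: the kernel of the surjective part of $f_{(k\,k+1)}$ is indeed a free parameter in Lemma \ref{lemma-B}, so it can be chosen to match the representative $b_i$, and then only one double coset per $\mathcal{B}_i$ is ever needed. Your version buys a shorter argument that dispenses with the $T_{P_z}$ machinery, at the cost of proving a slightly weaker intermediate fact (only the hand-picked building blocks, rather than all of $\mathcal{B}\cup\mathcal{C}$, are shown to lie in $\langle S(X,\P),U\rangle$) and of reworking the earlier lemmas rather than citing them.
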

\begin{proof}
  By Corollary \ref{th:SigXP:SXP}, it suffices to show that $S(X,
  \P)\cup U$ generates $\mathcal{B}$ and $\mathcal{C}$.

  Considering $\mathcal{C}$, we will first show that for each $P_x$ with $|P_x|
  > 1$, there exists an $f_x$ such that $P_y f_x \subseteq P_y$ for all $y \le
  n$, $f_x$ has image size $n-1$, and that $P_x$ is the unique block that is not
  mapped injectively by $f_x$.

  Let $i$ be such that $l_i=|P_x|$. If either $i=1$ (in which case $l_1
  =|P_x|\ge 2$), or $l_i-l_{i-1}\ge 2$, then there exists $f_{x'}
  \in U \cap \mathcal{C}_i$, that is not injective on $P_{x'}$ with
  $|P_{x'}|=|P_x|$. But then $f_x \in S(X,\P) f_{x'} S(X,\P)$ by Lemma
  \ref{lem:Gaction}.

  If $i \ge 2 $ and $l_i-l_{i-1}=1$, let $h_{i-1}$ be the element in
  $\mathcal{B}_{i-1}\cap U$. Let $f_{xy}$ be a mapping that maps  $P_x$ onto some $P_y$ with
$|P_y|=l_{i-1}=|P_x|-1$, maps $P_y$ injectively to $P_x$ and is the identity everywhere else.
  By Lemma \ref{lem:Gaction}, we have that $f_{xy} \in S(X, \P)h_{i-1}S(X, \P)$.
 Now $f_x:=f_{xy}^2$ can easily be checked to have the claimed properties.

Now for general $h \in \mathcal{C}_i$,  $l_i \ne 1$, as otherwise $C_i$ would be empty.
Choose a $P_x$ with $|P_x|=l_i$, then
$g  \in S(X,\P) f_x S(X,\P)$ by Lemma \ref{lem:Gaction}.
  Hence $\mathcal{C} \subseteq \langle S(X,\P), U \rangle$, as required.

Now, for each $z$ with $|P_z|=l_i  \ge 2$,
there is a function $ f_z \in \langle\mathcal{C} \subseteq S(X,\P) \cup U\rangle$
that maps $P_z$ to itself, is the identity everywhere
else,  and has an image that intersects $P_z$ with size $l_i-1$. Consider the subsemigroup $Q_z$ of
$T(X, \P)$, consisting of all elements that map $P_z$ into itself and are the identity outside of
$P_z$. $Q_z$ is clearly isomorphic to $T_{P_z}$, the full transformation semigroup on $P_z$.
This semigroup is generated
by its units together with a transformation of rank $|P_z|-1$. It follows that
$Q_z \subseteq \langle (S(X,\P)\cap S_z) \cup \{f_z\} \rangle$
for every $z$ (note that  this also holds trivially
if $|P_z|=1$).

Now consider any element $h \in \mathcal{B}_i$, for $i \le r-1$, and let $P_z$ be the unique part of $\P$
that is not mapped injectively by $h$.
 There exists an $h' \in Q_z$ that has the same kernel classes on $P_z$ as $h$. But both
$h \in \mathcal{B}_i$ and $h' \in Q_z$ only have singleton kernel classes outside of $P_z$, and hence
$h \in S(X, \P) h' S(X,\P)$ by Lemma \ref{lem:Gaction}. So $h \in \langle S(X,\P) \cup U\rangle$, as required.

\end{proof}

\begin{cor} Let $\P$ be a partition on $X$, such that $\P$ has exactly $m_i \ge 2$ blocks
  of size $n_i \ge 2$, $i=1, \dots, p$, blocks of unique sizes
  $l_1, \dots , l_q$, where $l_i \ge 2$, and $t$ singleton blocks (where
  $p$, $q$, $t$ might be $0$).  Then
$$\rank (\Sigma(X, \P):S(X,\P))= p+q+g'(t)-1+l$$
  where
\begin{itemize}
\item $g'(0)=0$ and $g'(t)=1$ for $t \ge 1$,
\item $l$ is the number of values $s$ for which $\P$ has a block of size $s\ge  2$, but no block of size $s-1$.
\end{itemize}
\end{cor}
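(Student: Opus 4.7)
\emph{Proof plan.} I would sandwich $\rank(\Sigma(X,\P):S(X,\P))$ between matching bounds derived from Theorem~\ref{th:SigXP:SXP} on one side and Lemmas~\ref{mathcalB} and~\ref{mathcalC} on the other. Theorem~\ref{th:SigXP:SXP} exhibits a generating set of $\Sigma(X,\P)$ modulo $S(X,\P)$ consisting of one representative from each $\mathcal{B}_i$ with $i \le r-1$ together with one representative from each $\mathcal{C}_i$ satisfying the hypothesis of Lemma~\ref{mathcalC}; this yields the upper bound. The two lemmas then say that every generating set must meet each such $\mathcal{B}_i$ and each admissible $\mathcal{C}_i$, yielding the lower bound. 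Since any $f \in \mathcal{B}_i$ sends at least one block to a block of a strictly different size (namely, a block of size $l_i$ to a block of size $l_{i+1}$), while any $f \in \mathcal{C}_j$ preserves block sizes, the family $\{\mathcal{B}_i\} \cup \{\mathcal{C}_j\}$ consists of pairwise disjoint sets (distinct $\mathcal{B}_i$'s are distinguished by the size $l_i$ of the block embedded into a larger block, and distinct $\mathcal{C}_j$'s by the size of the unique non-injective block). Hence the two bounds agree, and the relative rank equals $(r - 1) + \#\{\text{admissible } \mathcal{C}_i\}$.

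The rest is bookkeeping. Let $r$ denote the number of distinct block sizes of $\P$; these are the values $n_1, \ldots, n_p, l_1, \ldots, l_q$ together with the value $1$ precisely when $t \ge 1$, so $r = p + q + g'(t)$ and the $\mathcal{B}$-contribution equals $r - 1 = p + q + g'(t) - 1$.

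The main (if mild) obstacle is to verify that the number of admissible $\mathcal{C}_i$'s equals $l$, the number of block sizes $s \ge 2$ such that $\P$ has no block of size $s - 1$. If $t = 0$, then $l_1 \ge 2$, so $i = 1$ is admissible and accounts for the size $l_1$ (whose predecessor $l_1 - 1$ is not a block size, by minimality), while for $i \ge 2$ the gap condition $l_i - l_{i-1} \ge 2$ is precisely the statement that $l_i - 1$ is not a block size. If $t \ge 1$, then $l_1 = 1$, so $i = 1$ is not admissible, consistent with $l$ excluding any size $s$ for which $s - 1 = 1$ is a block size; the analysis for $i \ge 2$ is identical to the previous case. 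In either regime the $\mathcal{C}$-contribution is exactly $l$, and summing the two contributions yields $\rank(\Sigma(X,\P):S(X,\P)) = (r - 1) + l = p + q + g'(t) - 1 + l$, as required.
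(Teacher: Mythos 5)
Your proposal is correct and follows essentially the same route as the paper: the upper bound from the generating set of Theorem~\ref{th:SigXP:SXP}, the matching lower bound from Lemmas~\ref{mathcalB} and~\ref{mathcalC} (with the pairwise disjointness of the $\mathcal{B}_i$'s and $\mathcal{C}_j$'s, which the paper leaves implicit), and the same bookkeeping identifying $r-1$ with $p+q+g'(t)-1$ and the admissible $\mathcal{C}_i$'s with $l$.
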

\begin{proof}
From Lemma \ref{mathcalB}, Lemma \ref{mathcalC}, and Theorem \ref{th:SigXP:SXP} it follows that
$\rank(\Sigma(X,\P):S(X,\P))$ is one less than the number of distinct block sizes of $\P$ plus
the number of  block sizes that satisfy the conditions mentioned in Lemma \ref{mathcalC}. The first of these
numbers is $p+q+g'(t)-1$ and the second is $l$.
\end{proof}

\section{Problems}

Let $X$ be a finite set, let $\P$ be a partition and $S$ be a section, that is, for every $P\in \P$ we have  that $S\cap P$ is a singleton set. Given a set $Y\subseteq X$, we say that $f\in T(X)$ stabilizes $Y$ if $Yf\subseteq Y$.  Now consider the semigroup
\[
T(X,\P,S)=\{f\in T(X)\mid  \mbox{$f$ stabilizes $\P$ and $S$}\} .
\]
This semigroup, in addition to the obvious similarities with $T(X,\P)$, has many interesting properties:
\begin{enumerate}
\item both $T(X)$ and $PT(X)$, the semigroup of partial transformations on $X$, are examples of semigroups of this type; for instance,  $T(X)$ is $T(X,\{\{x\}\mid x\in X\},X)$ and $PT(X)$ is isomorphic  (for an element $0\not\in X$)  to $T(X\cup \{0\},\{X\cup\{0\}\},\{0\})$ (see \cite{AK0,AK1}).
\item Let $e^2=e\in T(X)$; the centralizer of $e$ in $T(X)$ is $C(e)=\{f\in T(X) \mid fe=ef\}$. Then  $C(e)=T(X,\ker(e),Xe)$ (see \cite{AK0,AK1}). In this setting, $T(X)$ is the centralizer of the identity and $PT(X)$ is the centralizer of a constant map.
\item $T(X,\P,S)$ is regular if and only if either
\begin{enumerate}
\item no part in $\P$ has more than $2$ elements; or
\item at most one of the parts in $P$ has size larger than $1$.
\end{enumerate}
(See \cite{AK1}.)
\item The singular elements of a regular $C(e)$ are generated by idempotents if and only if $e$ is the identity or a constant (see \cite{AAK}).
\item Taking into account that in the Cayley table of a semigroup, each column (seen as map) is contained in the centralizer of each row, the semigroups $T(X,\P,S)$ have some surprising consequences in equational logic (see \cite{AK-1}).
\end{enumerate}

Therefore given the importance of $T(X,\P,S)$ and its similarities with the semigroups $T(X,\P)$, the following problems are very natural.

\begin{prob}\label{p1}
Find the rank $T(X,\P,S)$, when $\P$ is uniform and $T(X,\P,S)$ is regular. (Given the results above, this amounts to find the rank of $T(X,\P,S)$ when all parts in $\P$ have exactly two elements.)
\end{prob}

The previous problem is a partial analogous of the main result in \cite{as}.
The full analogous would be the following.

\begin{prob}
Find the rank $T(X,\P,S)$, when $\P$ is uniform.
\end{prob}

Given the importance of regular semigroups in semigroup theory the following problem is also natural.
\begin{prob}
Find the rank of the regular semigroups $T(X,\P,S)$. (The solution of this problem requires the solution of Problem \ref{p1}.)
\end{prob}

Obviously, the ultimate goal of this sequence of problems would be the solution of the problem analogous to the main problem solved in this paper.

\begin{prob}
Find the rank of the regular semigroups $T(X,\P,S)$.
\end{prob}

\end{document}